\documentclass{article}
\usepackage[a4paper, left=1.8cm, right=1.8cm, top=2.5cm, bottom=1.5cm]{geometry}
\usepackage{authblk}
\usepackage{graphicx} 
\usepackage{amsmath, amssymb, amsthm}
\allowdisplaybreaks[4]
\usepackage{graphicx, epstopdf, epsfig, multirow, epic, bm}
\usepackage[raggedright]{titlesec}
\usepackage{hyperref} 
\usepackage{appendix}  
\usepackage{mathrsfs} 
\usepackage{algorithm}
\usepackage{algpseudocode}   
\usepackage{array}
\usepackage{authblk}
\usepackage{float}
\usepackage{booktabs} 
\usepackage{multirow}
\usepackage{hyperref} 
\hypersetup{
	colorlinks=true,
	linkcolor=blue,
	urlcolor=blue,
	citecolor=blue,
}
\usepackage{cleveref} 
\usepackage{colortbl}

\usepackage{caption}
\usepackage{subfigure}

\newtheorem{theorem}{Theorem}[section]
\newtheorem{remark}{Remark}[section]
\newtheorem{lemma}{Lemma}[section]
\newtheorem{assumption}{Assumption}[section]

\newcommand{\be}{\begin{equation}}
	\newcommand{\ee}{\end{equation}}
\newcommand{\ba}{\begin{array}}
	\newcommand{\ea}{\end{array}}
\newcommand{\bea}{\begin{eqnarray}}
	\newcommand{\eea}{\end{eqnarray}}

\def\BibTeX{{\rm B\kern-.05em{\sc i\kern-.025em b}\kern-.08em
		T\kern-.1667em\lower.7ex\hbox{E}\kern-.125emX}}

\usepackage[table,xcdraw]{xcolor}
\usepackage{listings} 
\title{A Constant-Modulus Optimization Approach to Quadratic $K$-Medoids Graph Clustering}
\author[1]{Yutong Zheng}
\author[1,2]{Qingna Li}
\author[1,*]{Wenshun Teng}
\affil[1]{School of Mathematics and Statistics, Beijing Institute of Technology, Beijing, China}
\affil[2]{Beijing Key Laboratory on MCAACI/Key Laboratory of Mathematical Theory and Computation in Information Security, Beijing Institute of Technology, Beijing, China}
\affil[*]{Corresponding author}
\date{October 1, 2026}

\begin{document}
	
	\maketitle
	
	\begin{abstract}
		Graph clustering aims to partition the vertices of a graph into groups with dense intra-cluster connections and sparse inter-cluster connections. In this paper, we study a quadratic $K$-medoids formulation for graph clustering from the perspective of CM optimization. By identifying the binary feasible set as a discrete CM set, we establish a connection between the quadratic $K$-medoids model and CM optimization. Rather than solving the original binary problem directly, we consider the convex hull of its feasible set and introduce a negative squared-norm penalty to promote extreme-point solutions, resulting in an extreme point pursuit model over a simple convex set. The efficient projection onto this convex set enables the use of projected-gradient-based optimization for large-scale medoid selection. Based on this formulation, we propose a two-stage clustering method. In Stage~1, a penalty-continuation projected gradient solver is used to approximately solve the penalized medoid-selection problem. In Stage~2, the selected medoids are converted into a graph partition through a unified two-level assignment rule, where Jaccard distance is used as the primary criterion and shortest-path distance is used as a secondary criterion when needed. Numerical experiments on synthetic PPM and SBM graphs, together with real-world graph data, demonstrate the effectiveness and computational efficiency of our method. The experiments also show that a smaller objective value in the medoid-selection problem does not necessarily lead to a more accurate final graph partition, highlighting the importance of evaluating both optimization performance and clustering quality after assignment.
	\end{abstract}

    \textbf{Keywords: Graph clustering; Network community detection; $K$-medoids; Binary quadratic optimization; Constant-modulus problems; Projected gradient method  }
	\section{Introduction}
	
	Graph clustering, also known as community detection, is a fundamental task in network analysis, and its solution methodologies span a wide range of disciplines, including statistical inference, dynamical systems, and optimization \cite{schaeffer2007graph,fortunato2010community,fortunato2016community}. Among them, optimization-based approaches have attracted considerable attention because of their relatively low computational cost and high accuracy, particularly methods based on modularity optimization \cite{intro-24,intro-add-24-1}. Although such methods offer several advantages, including short computational time and no requirement to pre-specify the number of clusters \cite{miasnikof2024graph}, they also suffer from several well-recognized theoretical and practical limitations, such as restrictions on the detectable clustering structures \cite{intro-70,aref2023heuristic}, the resolution limit \cite{intro-71,cq-2}, and degeneracy issues \cite{intro-71,cq-1}. These limitations have motivated extensive research into alternative approaches that do not rely on modularity maximization. 
	In 2010, Fan and Pardalos \cite{fan2010linear} pioneered a general graph partitioning model that optimizes graph partitioning by maximizing the total weight of intra-subset edges for a prespecified number of subsets. In 2012, Fan et al. \cite{intro-72} introduced an interval-uncertainty framework and developed a robust optimization model, thereby extending graph clustering to uncertain settings. In 2020, Miasnikof et al. \cite{gc-no34} proposed a combinatorial optimization model based on maximizing intra-cluster density. More recently, in 2024, Miasnikof et al. \cite{miasnikof2024graph} applied the QUBO-based $K$-medoids clustering model introduced by Bauckhage et al. \cite{k-medoids} in 2019 to graph clustering for the first time. They further developed a tailored Boltzmann-machine heuristic combined with parallel-tempering simulated annealing to reduce the risk of becoming trapped in poor local optima.
	
	This $K$-medoids model is particularly well suited for graph clustering because the selected representative vertices are actual vertices of the graph and therefore admit a direct interpretation. Moreover, it relies only on pairwise distance information and does not require an embedding in Euclidean space. When combined with structure-aware distance measures, such as the Jaccard distance, the model can effectively identify representative vertices from highly connected communities. However, the trade-off parameters $\alpha$ and $\beta$ need to be determined through empirical tuning, and the model is more sensitive to perturbations, resulting in greater performance variability in high-noise settings. Nevertheless, in most test scenarios, it can achieve more accurate clustering results within a short computational time. The associated quadratic formulation further balances centrality and separation, encouraging the selected medoids to be both representative of their communities and sufficiently distinct from one another.  These considerations provide the main motivation for the research presented in this paper.

	The key observation of this paper is that the feasible set of the quadratic $K$-medoids model has a natural constant-modulus (CM) structure \cite{liu2024expp1,liu2024expp2}. Indeed, the medoid-selection variable is a binary vector with exactly $K$ nonzero components, and hence every feasible point satisfies $\|{\bf x}\|_2^2=K$. Therefore, all feasible points lie on a sphere with a prescribed radius. This fixed-norm structure shows that the quadratic $K$-medoids problem naturally belongs to the class of CM optimization problems. Nevertheless, this geometric structure has not yet been explicitly exploited in the context of quadratic $K$-medoids graph clustering. In this paper, we focus on medoid selection and partition construction for undirected, unweighted graphs without self-loops or multiple edges, with vertices assigned to non-overlapping clusters. Accordingly, we do not provide a further review of the literature on CM optimization problems.

	Building on the CM structure of the binary feasible set, we consider the convex hull of the feasible set of the original quadratic $K$-medoids graph clustering model, rather than solving the binary formulation directly. By introducing a negative squared-norm penalty to promote extreme-point solutions, we obtain an extreme point pursuit model formulated as a continuous optimization problem over a simple convex set. The projection onto this convex set can be computed efficiently, providing a favorable foundation for applying projected gradient methods to medoid selection in large-scale graph clustering problems. On this basis, we propose a two-stage clustering method (TSCM) that combines continuous optimization-driven medoid selection with a unified graph-distance-based assignment procedure, thereby transforming the selected medoids into a final graph partition. The main contributions of this paper are summarized as follows:
	\begin{itemize} 
		\item We identify the binary feasible set in the quadratic $K$-medoids graph clustering model as a discrete CM set, thereby connecting the model with CM optimization. 
		\item We reformulate the quadratic $K$-medoids problem over the convex hull of the binary feasible set and introduce an extreme-point-promoting negative squared-norm penalty. The resulting model is a continuous optimization problem over the convex hull of the feasible set. 
		\item We develop a two-stage clustering method. In Stage~1, a penalty-continuation projected gradient solver is designed to approximately solve the penalized medoid-selection problemthrough refined multi-start initialization, a progressively increasing penalty parameter, and a BB projected gradient method combined with a projected Armijo line search. Binary candidate solutions are retained along the continuation path.
		In Stage~2, the selected medoids are converted into	a graph partition through a unified assignment procedure, which employs the Jaccard distance as the primary criterion and the shortest-path distance as the secondary criterion.
		\item We conduct numerical experiments on synthetic and real-world graphs to evaluate the clustering effectiveness and computational efficiency of TSCM. The results further reveal that a smaller objective value in the medoid-selection problem does not necessarily lead to a more accurate final graph partition, highlighting the importance of evaluating both optimization performance and final clustering quality.
		
	\end{itemize}

	\section{Preliminaries}
	\label{sec:preliminaries}
	For ease of reading, this section introduces a class of constant modulus problems and a quadratic $K$-medoids formulation for graph clustering.
	\subsection{A Class of Constant Modulus Problems}
	\label{subsec:cm-problems}
	
	We first recall a class of CM optimization problems.
	Let $\mathcal V\subseteq\mathbb R^n$ be a nonempty closed
	set. We say that $\mathcal V$ is a constant modulus set with modulus
	$\sqrt{C}$ if
	\[
	\|{\bf x}\|_2^2=C,
	\qquad
	\forall {\bf x}\in\mathcal V.
	\]
	A generic CM optimization problem takes the form
	\begin{equation}\label{eq:general-cm}
		\min_{{\bf x}\in\mathcal V} f({\bf x}),
	\end{equation}
	where $f:\mathcal D\to\mathbb R$ is an objective function defined on a
	domain $\mathcal D\subseteq\mathbb R^n$, and
	\[
	\mathcal V\subseteq \mathcal D,
	\qquad
	\operatorname{conv}(\mathcal V)\subseteq \mathcal D.
	\]
	The condition $\|{\bf x}\|_2^2=C$ means that all feasible points have the
	same Euclidean norm. Hence, although CM sets may have very different
	algebraic forms, they share a common geometric feature: they lie on a
	sphere of radius $\sqrt{C}$. This feature is central to the construction
	of the penalized formulations considered in this paper.
	
	Several important nonconvex constraint sets can be viewed as CM sets. Typical
	examples are summarized in Table~\ref{tab:cm-examples}.
	\begin{table}[H]
		\centering
		\caption{Examples of constant-modulus sets.}
		\label{tab:cm-examples}
		\begin{tabular}{lll}
			\hline
			Set & Definition & Constant squared norm $C$ \\
			\hline
			Binary vectors
			&
			$\{-1,1\}^n$
			&\quad\quad\quad\quad
			$n$
			\\[2mm]
			
			Unit sphere
			&
			$\mathbb S^{n-1}:=\{{\bf x}\in\mathbb R^n\mid \|{\bf x}\|_2=1\}$
			&\quad\quad\quad\quad
			$1$
			\\[2mm]
			
			Stiefel manifold
			&
			$\mathrm{St}(n,r):=\{X\in\mathbb R^{n\times r}\mid X^\top X=I_r\}$
			&\quad\quad\quad\quad
			$r$
			\\[2mm]
			
			Unit vectors
			&
			$\{{\bf e}_1,\ldots,{\bf e}_n\}\subseteq\mathbb R^n$
			&\quad\quad\quad\quad
			$1$
			\\
			\hline
		\end{tabular}
	\end{table}

	
	The main difficulty in solving \eqref{eq:general-cm} lies in the
	nonconvexity of $\mathcal V$. A natural idea is to replace $\mathcal V$
	by its convex hull $\operatorname{conv}(\mathcal V)$. However, the direct
	convex-hull relaxation
$
	\min_{{\bf x}\in\operatorname{conv}(\mathcal V)} f({\bf x}) 
$
	is generally not equivalent to the original CM problem, since the relaxed
	problem may admit fractional or non-CM minimizers. 
	
	To recover the original
	CM structure, the extreme point pursuit (EXPP) framework \cite{liu2024expp1} introduces the
	penalized formulation
	\begin{equation}\label{eq:general-expp}
		\min_{{\bf x}\in\operatorname{conv}(\mathcal V)}
		F_\lambda({\bf x})
		:=
		f({\bf x})-\lambda\|{\bf x}\|_2^2,
		\qquad
		\lambda>0.
	\end{equation}
The negative squared-norm term promotes large-norm points over the convex hull
and hence encourages solutions to move toward the original CM set. In
Section~\ref{sec:selection-expp}, we specialize this idea to the
fixed-cardinality feasible set arising from the quadratic $K$-medoids problem.

	\subsection{A Quadratic $K$-medoids Formulation for Graph Clustering}
	\label{subsec:quadratic-k-medoids}
	
	We now introduce the quadratic $K$-medoids formulation for graph clustering used in this paper. 
	
	Let $G=(V,E)$ be an undirected graph with $n=|V|$ vertices, and let
	$
	D=(d_{ij})\in\mathbb R^{n\times n} 
	$
	be a vertex--vertex distance matrix, where $d_{ij}$ measures the dissimilarity
	between vertices $i$ and $j$. The construction of $D$ will be specified in
	Subsection~\ref{subsubsec:jaccard-distance}.
	
	The $K$-medoids model selects $K$ representative vertices, called medoids, from
	the vertex set itself. Unlike centroids in $K$-means, medoids are actual
	vertices of the graph and are therefore directly interpretable. Let
	\[
	{\bf x}=(x_1,\ldots,x_n)^\top\in\{0,1\}^n
	\]
	be the medoid-selection vector, where
	\[
	x_i=
	\begin{cases}
		1, & \text{if vertex } i \text{ is selected as a medoid},\\
		0, & \text{otherwise}.
	\end{cases}
	\]
	The requirement that exactly $K$ medoids are selected is expressed as ${\bf 1}^\top{\bf x}=K$.
	
	Following the quadratic $K$-medoids formulation, we consider two competing criteria. The first one is a centrality term,
\[
{\bf x}^\top D{\bf 1}
=
\sum_{i=1}^n x_i\sum_{j=1}^n d_{ij},
\]
which measures the total distance from the selected medoids to all vertices. Minimizing this term encourages the selected medoids to be globally central. The second one is a scattering term,
\[
\frac{1}{2}{\bf x}^\top D{\bf x}
=
\sum_{1\le i<j\le n} x_i x_j d_{ij},
\]
which measures the total pairwise distance among selected medoids. Maximizing this term encourages the medoids to be well separated and avoids selecting redundant representatives from the same region of the graph.
	
	Balancing these two effects leads to the following quadratic binary
	optimization problem:
	\begin{equation}\label{eq:kmedoids-binary}
		\begin{aligned}
			\min_{{\bf x}}\quad
			& f({\bf x})
			:=
			\beta{\bf x}^{\top}D{\bf 1}
			-
			\frac{\alpha}{2}{\bf x}^{\top}D{\bf x}\\
			\text{s.t.}\quad
			& {\bf 1}^{\top}{\bf x}=K,\\
			& x_i\in\{0,1\},\quad i=1,\ldots,n,
		\end{aligned}
	\end{equation}
	where $\alpha>0$ and $\beta>0$ are trade-off parameters. The parameter $\beta$
	controls the importance of selecting central vertices, while $\alpha$ controls
	the importance of selecting mutually distant medoids.
	
	Problem \eqref{eq:kmedoids-binary} provides a compact quadratic	formulation for graph clustering. It selects $K$ representative vertices by trading off centrality and scattering, uses a graph-based distance matrix to capture structural dissimilarity. 
	Its feasible set has CM structure, which 
	be exploited in the next section to
	derive an exact-penalty continuous reformulation over its convex hull.

	\subsection{Distance Selection: Jaccard distance}   
	\label{subsubsec:jaccard-distance}
	
	Graph clustering aims to partition vertices into groups whose members are
	similar in terms of their connectivity patterns. In this setting, the
	notion of distance should reflect structural similarity rather than merely
	the length of a shortest path. For this reason, we adopt the Jaccard
	distance, which measures the dissimilarity between two vertices according
	to the overlap of their neighborhoods.
	
	Let $G=(V,E)$ be an undirected graph. For each vertex $i\in V$, denote by
	\[
	a_i:=\{v\in V\mid (i,v)\in E\}
	\]
	the neighborhood of vertex $i$. The Jaccard similarity between vertices
	$i$ and $j$ is defined as
	\[
	s_{ij}
	:=
	\frac{|a_i\cap a_j|}{|a_i\cup a_j|},
	\]
	and the corresponding Jaccard distance is
	\begin{equation}\label{eq:jaccard-distance}
		d_{ij}
		:=
		1-s_{ij}
		=
		1-
		\frac{|a_i\cap a_j|}{|a_i\cup a_j|}
		\in[0,1].
	\end{equation}
	The Jaccard distance was
	originally introduced in botany \cite{jaccard-1} and has since been used
	in many areas involving set-based similarity comparisons.
	
	The benefits of using the Jaccard distance can be mainly summarized as follows: (1) Wide Application. Jaccard distance is widely used in various fields, such as complex networks \cite{jaccard-2}, text classification \cite{jaccard-5} and plant recognition \cite{jaccard-6}. (2) Simple Computation: All calculations are pairwise, which can be parallelized or incrementally processed. They do not require storing the entire adjacency matrix or Laplacian matrix in memory, nor do they require any expensive computations like inversion or eigendecomposition. (3) Clear Interpretability. Jaccard distance can capture the similarity of connectivity between vertices and community structures, and it can also capture the discreteness of the graph. 
	
	In the context of graph clustering, previous studies
	\cite{jaccard-3,jaccard-4} have also demonstrated that Jaccard-type
	distances can provide effective measures of vertex dissimilarity. These
	properties make the Jaccard distance a natural choice for constructing the
	distance matrix $D=(d_{ij})$ used in this paper. 

	\section{Constant-Modulus Exact Reformulation of the Quadratic $K$-medoids Problems}
	\label{sec:selection-expp}
	
	We now show that the quadratic $K$-medoids formulation can be viewed as a special case of the constant modulus (CM) problem. This observation allows us to replace the discrete feasible set by its convex hull and to use the negative squared-norm penalty proposed in the EXPP framework. The role of this section is therefore twofold. Firstly,
	we identify the precise CM structure of our model. Secondly, we prove that, under a standard Lipschitz-gradient condition, the resulting penalized convex-hull formulation is an exact reformulation of the original problem.
	
	Let ${\bf x}=(x_1,x_2,\ldots,x_n)^\top\in\mathbb R^n$. Consider the problem \eqref{eq:kmedoids-binary}.
	The feasible set of \eqref{eq:kmedoids-binary} is the selection vector set
	\begin{equation}\label{eq:selection-set}
		\mathcal U_k^n
		=
		\left\{
		{\bf x}\in\{0,1\}^n
		\mid
		{\bf 1}^{\top}{\bf x}=K
		\right\}.
	\end{equation}
	For every ${\bf x}\in\mathcal U_k^n$, exactly $K$ components of ${\bf x}$ are equal to one and the remaining components are equal to zero. Hence
	\[
	\|{\bf x}\|_2^2
	=
	\sum_{i=1}^n x_i^2
	=
	\sum_{i=1}^n x_i
	=
	K,
	\qquad
	\forall {\bf x}\in\mathcal U_k^n .
	\]
	Therefore, $\mathcal U_k^n$ is a CM set with modulus $\sqrt{K}$. Moreover, its convex hull is given by
	\[
	\operatorname{conv}(\mathcal U_k^n)
	=
	C_k
	:=
	\left\{
	{\bf x}\in\mathbb R^n
	\mid
	{\bf 1}^{\top}{\bf x}=K,\ 
	0\le x_i\le 1,\ i=1,\ldots,n
	\right\}.
	\]
	
	We next recall the basic concave-minimization principle underlying the EXPP reformulation. The principle says that minimizing a strictly concave function over the convex hull of a set does not create new minimizers outside the original set. This is the key mechanism by which the convex hull relaxation becomes exact.
	
	\begin{lemma}\cite[Lemma 1]{liu2024expp1}
		\label{lem:concave-min}
		Let $\mathcal A\subseteq \mathbb R^n$ be a nonempty set, and let $\Phi:\operatorname{conv}(\mathcal A)\to\mathbb R$ be strictly concave. Assume that
		\[
		\min_{{\bf x}\in \mathcal A}\Phi({\bf x})
		\]
		has an optimal solution. Then
		\[
		\min_{{\bf x}\in \operatorname{conv}(\mathcal A)}
		\Phi({\bf x})
		\]
		is an exact reformulation of the above problem in the following sense:
		the two problems have the same optimal solution set.
	\end{lemma}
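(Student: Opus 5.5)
The plan is to show two inclusions between optimal solution sets, using only that every point of $\operatorname{conv}(\mathcal A)$ is a finite convex combination of points of $\mathcal A$ (Carathéodory's theorem, or simply the definition of the convex hull). Let ${\bf x}^\star$ be an optimal solution of $\min_{{\bf x}\in\mathcal A}\Phi({\bf x})$ and set $p^\star:=\Phi({\bf x}^\star)$.

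The first step is to show that $p^\star$ is also the optimal value over $\operatorname{conv}(\mathcal A)$. Take any ${\bf y}\in\operatorname{conv}(\mathcal A)$ and write ${\bf y}=\sum_{i=1}^m \theta_i {\bf a}_i$ with ${\bf a}_i\in\mathcal A$, $\theta_i>0$, and $\sum_i\theta_i=1$. Here we may assume the ${\bf a}_i$ are distinct, after merging repeated points. Concavity of $\Phi$ on $\operatorname{conv}(\mathcal A)$ gives, by the finite Jensen inequality (proved by induction on $m$ from the two-point definition),
\[
\Phi({\bf y})\ \ge\ \sum_{i=1}^m\theta_i\Phi({\bf a}_i)\ \ge\ p^\star .
\]
Since $\mathcal A\subseteq\operatorname{conv}(\mathcal A)$, the infimum over the hull equals $p^\star$ and is attained at ${\bf x}^\star$. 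Hence every minimizer over $\mathcal A$ is a minimizer over the hull. Moreover, a point of $\mathcal A$ that minimizes over the hull has value $p^\star$, so it minimizes over $\mathcal A$ as well.

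The remaining step, which is the only one using strictness, is to show that no minimizer over the hull lies outside $\mathcal A$. Suppose ${\bf y}\in\operatorname{conv}(\mathcal A)\setminus\mathcal A$ with $\Phi({\bf y})=p^\star$. In the representation above we then have $m\ge 2$ with distinct points, since $m=1$ would force ${\bf y}={\bf a}_1\in\mathcal A$. I will use the strict Jensen inequality: for $m\ge2$ distinct points and positive weights, $\Phi(\sum\theta_i{\bf a}_i)>\sum\theta_i\Phi({\bf a}_i)$. To prove it, split ${\bf y}=\theta_1{\bf a}_1+(1-\theta_1){\bf z}$, where ${\bf z}=\sum_{i\ge2}\frac{\theta_i}{1-\theta_1}{\bf a}_i\in\operatorname{conv}(\mathcal A)$.

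\emph{Case ${\bf z}\neq{\bf a}_1$.} Strict concavity gives
\[
\Phi({\bf y})>\theta_1\Phi({\bf a}_1)+(1-\theta_1)\Phi({\bf z}),
\]
and ordinary Jensen applied to $\Phi({\bf z})$ finishes the bound.

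\emph{Case ${\bf z}={\bf a}_1$.} Then ${\bf y}={\bf a}_1\in\mathcal A$, which contradicts the assumption. Alternatively, reorder so that the split point differs from ${\bf a}_1$.

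Either way the strict inequality gives $\Phi({\bf y})>\sum\theta_i\Phi({\bf a}_i)\ge p^\star$, a contradiction. So the two optimal solution sets coincide. I expect this strict-inequality bookkeeping to be the main care point, specifically the degenerate case handled by the split. Everything else is routine.
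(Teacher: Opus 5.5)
Your proof is correct and complete. The paper gives no argument for this lemma: it is quoted from Lemma~1 of the EXPP reference and used only as a black box in the proof of Theorem~\ref{thm:selection-expp-exact}. So there is no in-paper proof to compare against, and your write-up supplies the missing self-contained justification.

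Your route has two steps. First, finite Jensen shows that the optimal value over $\operatorname{conv}(\mathcal A)$ equals $p^\star$ and is attained at ${\bf x}^\star$. Second, a two-point strict-concavity split rules out minimizers in $\operatorname{conv}(\mathcal A)\setminus\mathcal A$. Both steps use only finite convex combinations. The argument therefore works for an arbitrary nonempty $\mathcal A$, exactly as the lemma is stated, with no closedness or compactness of the kind an extreme-point (Krein--Milman type) argument would need.

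One simplification: the degenerate case is vacuous however the points are ordered, and whether or not they are distinct. Indeed, ${\bf z}={\bf a}_1$ forces ${\bf y}=\theta_1{\bf a}_1+(1-\theta_1){\bf a}_1={\bf a}_1\in\mathcal A$. So you can drop the merging of repeated points and the reordering alternative. You only need $m\ge 2$, which follows from ${\bf y}\notin\mathcal A$ and guarantees $\theta_1\in(0,1)$.
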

		
	Lemma~\ref{lem:concave-min} suggests that the direct relaxation	$\min_{{\bf x}\in C_k}f({\bf x})$ would be exact if $f$ itself were	strictly concave on $C_k$. In general, however, this strict concavity cannot be guaranteed. This motivates the EXPP penalty $-\lambda\|{\bf x}\|_2^2$, whose role is to concavify the objective and drive the minimizer toward an extreme point of $C_k$.
	
	We impose the following standard smoothness assumption.
	
	\begin{assumption}
		\label{ass:lipschitz-gradient-selection}
		The function $f$ has $L$-Lipschitz continuous gradient on $C_k$, i.e., there exists a constant $L>0$ such that
		\[
		\|\nabla f({\bf x})-\nabla f({\bf y})\|_2
		\le
		L\|{\bf x}-{\bf y}\|_2,
		\qquad
		\forall {\bf x},{\bf y}\in C_k.
		\]
	\end{assumption}
	
	\begin{remark}
		Assumption~\ref{ass:lipschitz-gradient-selection} is mild in the present setting. Since $f$ is a quadratic function, its gradient is Lipschitz continuous on the whole space. Indeed, letting
		\[
		S:=\frac{D+D^\top}{2},
		\]
		we have ${\bf x}^{\top}D{\bf x}={\bf x}^{\top}S{\bf x}$, and hence $\nabla f({\bf x})=\beta D{\bf 1}-\alpha S{\bf x}$. 	Thus, one may take $L=\alpha\|S\|_2$ when $\alpha\ge 0$. If $D$ is symmetric, then $S=D$ and $L=\alpha\|D\|_2$.
	\end{remark}
	
	The next lemma explains why the negative squared-norm term makes the objective strictly concave when the penalty parameter is sufficiently large. 
	
	\begin{lemma}\cite[Lemma 2]{liu2024expp1}
		\label{lem:concavification-selection}
		Suppose that Assumption~\ref{ass:lipschitz-gradient-selection} holds. Then, for any $\lambda>L/2$, the function
		\[
		g_\lambda({\bf x})
		:=
		f({\bf x})-\lambda\|{\bf x}\|_2^2
		\]
		is strictly concave on $C_k$.
	\end{lemma}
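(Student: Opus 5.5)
The plan is to show that $-g_\lambda$ is strongly convex on $C_k$ by combining the $L$-Lipschitz gradient bound with the strongly monotone gradient of $\lambda\|{\bf x}\|_2^2$. Write $h({\bf x}):=-g_\lambda({\bf x})=\lambda\|{\bf x}\|_2^2-f({\bf x})$, so that strict concavity of $g_\lambda$ is the same as strict convexity of $h$. Since $C_k$ is convex and $f$ is differentiable on it, it suffices to prove that $\nabla h$ is strictly monotone on $C_k$. We first record that $\nabla h({\bf x})=2\lambda{\bf x}-\nabla f({\bf x})$.

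For ${\bf x},{\bf y}\in C_k$, Cauchy--Schwarz together with Assumption~\ref{ass:lipschitz-gradient-selection} gives
\[
\langle \nabla f({\bf x})-\nabla f({\bf y}),{\bf x}-{\bf y}\rangle\le L\|{\bf x}-{\bf y}\|_2^2 .
\]
It follows that
\[
\langle \nabla h({\bf x})-\nabla h({\bf y}),{\bf x}-{\bf y}\rangle\ge (2\lambda-L)\|{\bf x}-{\bf y}\|_2^2 ,
\]
and the right-hand side is strictly positive whenever ${\bf x}\neq{\bf y}$ and $\lambda>L/2$.

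Next we pass from monotonicity to strict convexity along segments. Fix ${\bf x}\neq{\bf y}$ in $C_k$ and set $\phi(t):=h({\bf y}+t({\bf x}-{\bf y}))$ for $t\in[0,1]$, which is well defined because $C_k$ is convex. Then $\phi'(t)=\langle\nabla h({\bf y}+t({\bf x}-{\bf y})),{\bf x}-{\bf y}\rangle$. Applying the monotonicity inequality to the points ${\bf y}+t({\bf x}-{\bf y})$ and ${\bf y}+s({\bf x}-{\bf y})$ yields
\[
(\phi'(t)-\phi'(s))(t-s)\ge(2\lambda-L)(t-s)^2\|{\bf x}-{\bf y}\|_2^2>0
\]
for $t\neq s$. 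Hence $\phi'$ is strictly increasing, so $\phi$ is strictly convex on $[0,1]$. In particular $\phi(\theta)<(1-\theta)\phi(0)+\theta\phi(1)$ for $\theta\in(0,1)$, which is exactly strict convexity of $h$ on $C_k$. Equivalently, $g_\lambda$ is strictly concave on $C_k$, and in fact strongly concave with modulus $2\lambda-L$.

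The main obstacle is a technical one: the Lipschitz bound is assumed only on $C_k$, and $C_k$ lies in the affine hyperplane ${\bf 1}^\top{\bf x}=K$, so it has empty interior in $\mathbb R^n$. For this reason I avoid Hessian arguments and any appeal to an open neighbourhood, and work entirely with one-dimensional restrictions to segments inside $C_k$, using that $\phi$ is differentiable on $[0,1]$. In the paper's quadratic setting this can be cross-checked directly: $\nabla^2 g_\lambda=-\alpha S-2\lambda I\preceq(\alpha\|S\|_2-2\lambda)I\prec0$ once $\lambda>L/2$ with $L=\alpha\|S\|_2$.
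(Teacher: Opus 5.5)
Your argument is correct and is essentially the paper's. The paper invokes the descent lemma to conclude that $\frac{L}{2}\|{\bf x}\|_2^2-f({\bf x})$ is convex on $C_k$ and then adds the strictly convex term $(\lambda-\frac{L}{2})\|{\bf x}\|_2^2$, whereas you derive the equivalent first-order fact that $\nabla(-g_\lambda)$ is $(2\lambda-L)$-strongly monotone on $C_k$ and integrate it along segments. Your segment-based version has the small advantage of making explicit that no interior of $C_k$ is needed, and it also records the strong-concavity modulus $2\lambda-L$.
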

	
	\begin{proof}
		Since $f$ has $L$-Lipschitz continuous gradient on $C_k$, the standard descent lemma implies that $-f$ is $L$-weakly convex on $C_k$. Equivalently, for any $\mu>L$, the function
		\[
		f({\bf x})-\frac{\mu}{2}\|{\bf x}\|_2^2
		\]
		is strictly concave on $C_k$. Taking $\mu=2\lambda$, we obtain strict concavity of
		\[
		f({\bf x})-\lambda\|{\bf x}\|_2^2
		\]
		whenever $2\lambda>L$, or equivalently $\lambda>L/2$.
	\end{proof}
	
	We can now state the exact reformulation result for our model. 
	
	\begin{theorem}
		\label{thm:selection-expp-exact}
		Suppose that Assumption~\ref{ass:lipschitz-gradient-selection} holds. For any $\lambda>L/2$, the problem \eqref{eq:kmedoids-binary} is equivalent to the following convex-hull penalized problem:
		\begin{equation}\label{k-meroid-expp}
			\begin{aligned}
				\min_{{\bf x}}\quad
				& g_\lambda({\bf x})
				:=
				f({\bf x})-\lambda\|{\bf x}\|_2^2\\
				\text{s.t.}\quad
				& {\bf x}^{\top}{\bf 1}=K,\\
				& 0\leq x_i\leq 1,\quad i=1,\ldots,n .
			\end{aligned}
		\end{equation}
		More precisely, the optimal solution set of \eqref{k-meroid-expp} is equal to the optimal solution set of \eqref{eq:kmedoids-binary}. The optimal values differ only by the constant $-\lambda K$.
	\end{theorem}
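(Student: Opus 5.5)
The plan is to apply Lemma~\ref{lem:concave-min} with $\mathcal A=\mathcal U_k^n$ and $\Phi=g_\lambda$. Once its hypotheses are checked, it gives equality of the optimal solution sets of $\min_{\mathcal U_k^n} g_\lambda$ and $\min_{C_k} g_\lambda$. The CM property then transfers this statement from $g_\lambda$ back to $f$ on $\mathcal U_k^n$.

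\textbf{Step 1 (hypotheses of Lemma~\ref{lem:concave-min}).} We use the identity $\operatorname{conv}(\mathcal U_k^n)=C_k$, stated earlier. If this needs justification, it follows from total unimodularity of the constraint matrix of $C_k$: a single row $\mathbf 1^\top$ together with the box constraints, with integral right-hand side $K$. This makes every vertex of the polytope $C_k$ integral, so every vertex lies in $\mathcal U_k^n$. Since $\lambda>L/2$, Lemma~\ref{lem:concavification-selection} shows that $g_\lambda$ is strictly concave on $C_k$. Finally, $\mathcal U_k^n$ is a finite nonempty set (because $1\le K\le n$), so $\min_{\mathcal U_k^n} g_\lambda$ attains its minimum.

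\textbf{Step 2 (apply the lemma).} Lemma~\ref{lem:concave-min} now gives
\[
\operatorname*{argmin}_{{\bf x}\in C_k} g_\lambda({\bf x})=\operatorname*{argmin}_{{\bf x}\in\mathcal U_k^n} g_\lambda({\bf x}).
\]

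\textbf{Step 3 (remove the penalty on the CM set).} Every ${\bf x}\in\mathcal U_k^n$ satisfies $\|{\bf x}\|_2^2=K$, so on $\mathcal U_k^n$ we have $g_\lambda({\bf x})=f({\bf x})-\lambda K$. A constant shift does not change the argmin, so $\operatorname{argmin}_{\mathcal U_k^n} g_\lambda=\operatorname{argmin}_{\mathcal U_k^n} f$, and this is the optimal solution set of \eqref{eq:kmedoids-binary}. Combining this with Step~2 shows that problem \eqref{k-meroid-expp} has the same optimal solution set as \eqref{eq:kmedoids-binary}.

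\textbf{Step 4 (optimal values).} Take any common minimizer ${\bf x}^*\in\mathcal U_k^n$. The optimal value of \eqref{k-meroid-expp} is $g_\lambda({\bf x}^*)=f({\bf x}^*)-\lambda K$, which differs from the optimal value of \eqref{eq:kmedoids-binary} by exactly $-\lambda K$.

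\textbf{Main obstacle.} The step most likely to need care is Step~1, namely the convex hull identity together with the correct use of strict concavity. If the proof of Lemma~\ref{lem:concave-min} is to be made self-contained, I would argue directly. Suppose $\bar{\bf x}\in C_k\setminus\mathcal U_k^n$. Write $\bar{\bf x}$ as a convex combination $\sum_j\theta_j {\bf v}_j$ of at least two distinct points ${\bf v}_j\in\mathcal U_k^n$ with all $\theta_j>0$. Strict concavity gives
\[
g_\lambda(\bar{\bf x})>\sum_j\theta_j g_\lambda({\bf v}_j)\ge \min_{\mathcal U_k^n} g_\lambda,
\]
so $\bar{\bf x}$ is not a minimizer over $C_k$. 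Conversely, every ${\bf x}\in C_k$ satisfies $g_\lambda({\bf x})\ge\min_{\mathcal U_k^n} g_\lambda$ by concavity. Therefore the two minimum values coincide and the two argmin sets agree.
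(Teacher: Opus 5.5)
Your proposal is correct and follows the paper's proof essentially step for step: strict concavity of $g_\lambda$ on $C_k$ from Lemma~\ref{lem:concavification-selection}, then Lemma~\ref{lem:concave-min} with $\mathcal A=\mathcal U_k^n$ and $\Phi=g_\lambda$, and finally the identity $g_\lambda=f-\lambda K$ on $\mathcal U_k^n$ to pass back to $f$ and read off the $-\lambda K$ shift in optimal values. Your additional checks are all sound and make explicit what the paper takes for granted: the total-unimodularity justification of $\operatorname{conv}(\mathcal U_k^n)=C_k$, attainment of the minimum over the finite set $\mathcal U_k^n$, and the direct strict-Jensen proof of Lemma~\ref{lem:concave-min}.
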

	
	\begin{proof}
		The feasible set of	\eqref{k-meroid-expp} is exactly
		$
		C_k=\operatorname{conv}(\mathcal U_k^n). 
		$
		By Lemma~\ref{lem:concavification-selection}, $g_\lambda$ is strictly concave on $C_k$ whenever $\lambda>L/2$. Applying
		Lemma~\ref{lem:concave-min} with
		\[
		\mathcal A=\mathcal U_k^n,
		\qquad
		\Phi=g_\lambda,
		\]
		we obtain that
		\[
		\min_{{\bf x}\in C_k}g_\lambda({\bf x})
		\]
		and
		\[
		\min_{{\bf x}\in\mathcal U_k^n}g_\lambda({\bf x})
		\]
		have the same optimal solution set.
		
		It remains to relate the minimization of $g_\lambda$ over $\mathcal U_k^n$ to the original objective $f$. For every
		${\bf x}\in\mathcal U_k^n$, we have
		\[
		\|{\bf x}\|_2^2=K.
		\]
		Therefore,
		\[
		g_\lambda({\bf x})
		=
		f({\bf x})-\lambda K,
		\qquad
		\forall {\bf x}\in\mathcal U_k^n.
		\]
		The term $-\lambda K$ is constant over $\mathcal U_k^n$ and hence does not change the set of minimizers. Consequently,
		\[
		\operatorname*{arg\,min}_{{\bf x}\in\mathcal U_k^n}
		g_\lambda({\bf x})
		=
		\operatorname*{arg\,min}_{{\bf x}\in\mathcal U_k^n}
		f({\bf x}).
		\]
		Combining this identity with the exactness over the convex hull gives
		\[
		\operatorname*{arg\,min}_{{\bf x}\in C_k}
		g_\lambda({\bf x})
		=
		\operatorname*{arg\,min}_{{\bf x}\in\mathcal U_k^n}
		f({\bf x}).
		\]
		Thus, \eqref{k-meroid-expp} is an exact reformulation of \eqref{eq:kmedoids-binary}. Since $g_\lambda({\bf x})=f({\bf x})-\lambda K$ on $\mathcal U_k^n$, the optimal values differ by the constant $-\lambda K$.
	\end{proof}


	\section{The Proposed Two-Stage Clustering Method}\label{sec:method}
	
	In this section, we present a two-stage clustering method (TSCM) for graph clustering.	The overall procedure is summarized in Algorithm~\ref{alg:two-stage}. Stage 1 solves the medoid-selection problem, for which we use a Penalty-Continuation Projected Gradient (PCPG) solver in Algorithm~\ref{alg:pcpg-solver}. Stage 2 serves as a unified decoding procedure that converts the selected medoids into a graph partition by assigning each vertex to its nearest centroid according to a two-level distance criterion, so that every vertex is assigned to exactly one cluster.
	
	Figure~\ref{fig-process} visually presents the complete execution process of Algorithm~\ref{alg:two-stage}. Given an example input image, the algorithm first enters Stage 1---Medoid Identification, which identifies two representative medoid points, namely vertices 3 and 6. Subsequently, the algorithm proceeds to Stage 2, where it performs clustering partitioning based on the two determined medoids, and finally outputs two clustering results, denoted as $C_{1}=\{1,2,3\}$ and $C_{2}=\{4,5,6,7,8,9\}$. The entire flow clearly illustrates the progressive process from the original input to the final clustering output.
	
	\begin{algorithm}[htbp]
		\caption{Two-Stage Quadratic $K$-Medoids Graph Clustering (TSCM)}
		\label{alg:two-stage}
		\begin{algorithmic}[1]
			\State \textbf{Input:} number of vertices $n$, number of clusters \(K\), $\beta$, $\alpha$
			\State \textbf{Stage 1: Medoid Identification}
			\State \quad Solve the quadratic $K$-medoids problem via the Penalty-Continuation Projected Gradient (PCPG) solver in Algorithm~\ref{alg:pcpg-solver};
			\State \quad to obtain \(K\) medoid nodes \(\mathcal{M} = \{m_1,\dots,m_K\}\);
			\State \textbf{Stage 2: Cluster Formation}
			\State \quad \textbf{for} each node \(v \in V\) \textbf{do}
			\State \quad \quad Compute Jaccard distance \(d_J(v, m_k)\) to each medoid \(m_k \in \mathcal{M}\);
			\State \quad \quad Assign \(v\) to the cluster \(C_k\) whose medoid is the closest;
			ties are broken by the shortest-path distance
			\State \quad \textbf{end}
			\State \textbf{Output:} Clusters \(C_1,\dots,C_K\)
		\end{algorithmic}
	\end{algorithm}
	
	\begin{figure}[htp]
		\centering
		\includegraphics[width=1\textwidth]{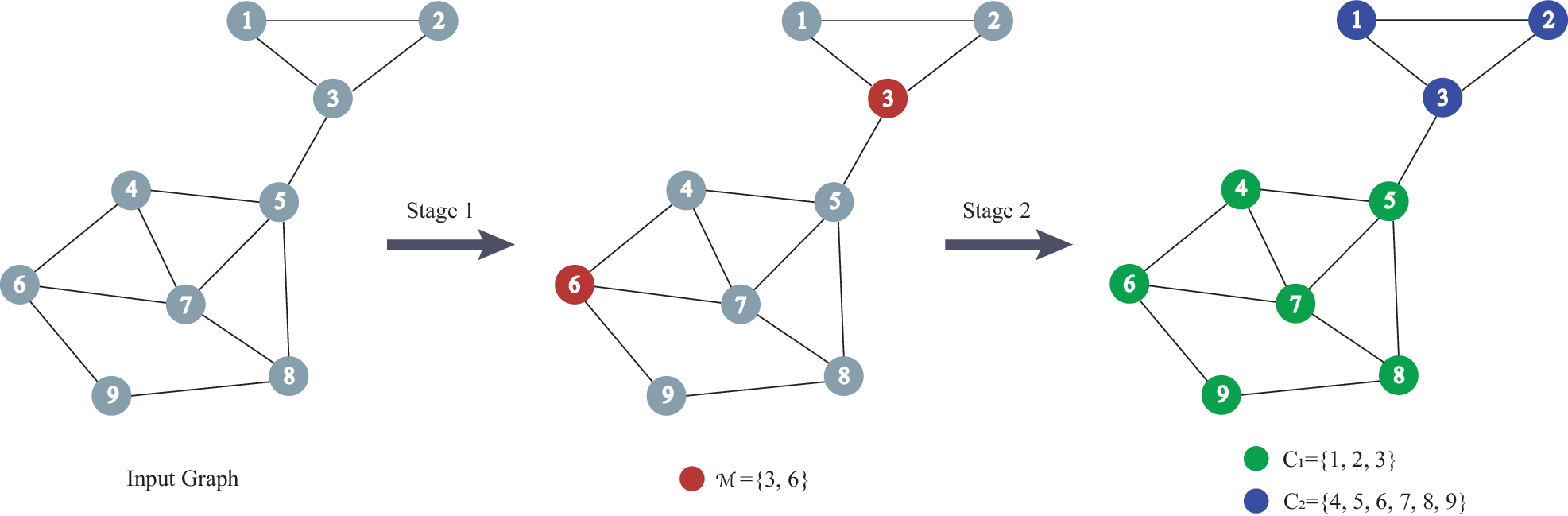}\\
		\caption{Schematic illustration of the proposed two-stage clustering method. The left part shows the original input data (unclassified); the middle part highlights the two medoid points identified in Stage 1 (in red); the right part displays the final two clusters (in blue and green) obtained in Stage 2 based on these medoids.}
		\label{fig-process}
	\end{figure}
	
	\subsection{Stage 1: Medoid Identification via PCPG}
	\label{sec:stage1}
	We solve the penalized problem over the convex-hull relaxation in~\eqref{k-meroid-expp} using a penalty-continuation projected-gradient (PCPG) scheme. Starting from a refined multi-start initialization, PCPG progressively increases the penalty parameter and approximately solves each fixed-penalty subproblem by a BB projected-gradient method with a projected Armijo line search \cite{zheng2026exact}. Binary candidate solutions are retained along the continuation path according to a prescribed incumbent criterion. The complete procedure is summarized in Algorithm~\ref{alg:pcpg-solver}.
	
	\begin{algorithm}[H] 
		\caption{Penalty-Continuation Projected Gradient (PCPG) Solver} 
		\label{alg:pcpg-solver} 
		\begin{algorithmic}[1] 
			\State \textbf{Input:} $D$, $K$, $\alpha>0$, $\beta>0$, $\lambda_0>0$, $\lambda_{\max}>0$, $\varepsilon_{\rm gap}>0$,  $N_{\rm init}$, $N_{\rm ref}$, criterion $\chi\in\{\mathrm{obj},\mathrm{assign}\}$. 
			\State \textbf{Initialization:} 
			\State \quad Generate $N_{\rm init}$ medoid sets by farthest-first initialization and refine each by $N_{\rm ref}$ $K$-medoids passes.
			\State \quad Select the candidate with the smallest nearest-medoid assignment cost as $\mathcal M^0$, and construct the starting point ${\bf x}^0\in\{0,1\}^n$ from $\mathcal M^0$.
			\State \quad Initialize ${\bf x}^{\rm bi}$ as the best binary candidate among the $N_{\rm init}$ initialization candidates according to the score $S_\chi$ defined in \eqref{eq:incumbent-score}.
			\State \quad Set $\ell:=0$ and $\lambda_{l}:=\lambda_0$.
			 
			\While{true} 			 
			\State Approximately solve the fixed-$\lambda$ penalized problem \eqref{k-meroid-expp-l} by the BB projected-gradient method with a projected Armijo line search, initialized at ${\bf x}^{\ell}$, to obtain ${\bf x}^{\ell+1}$.
			\State Compute the binary gap according to \eqref{eq:binary-gap}.
			\State Generate the binary candidate
			$\widehat{\bf x}^{\,\ell+1}$ and construct
			the corresponding medoid set
			$\widehat{\mathcal M}^{\,\ell+1}$.
			\State Update ${\bf x}^{\rm bi}$ if
			$\widehat{\mathcal M}^{\,\ell+1}$
			yields a smaller value of the selected criterion $S_\chi$.
			\If{$\operatorname{gap}({\bf x}^{\ell+1})
				\leq\varepsilon_{\rm gap}$
				\textbf{or}
				$\lambda\geq\lambda_{\max}$}
			\State \textbf{break}
			\EndIf
			\State Compute $r_{\ell+1}$ and choose
			$\tau_{\ell+1}$ according to
			\eqref{eq:normalized-gap} and \eqref{eq:adaptive-tau}.
			
			\State Update $\lambda$ according to
			\eqref{eq:lambda-continuation},
			and set $\ell:=\ell+1$.
			\EndWhile
			
			\State \textbf{Output:}
			incumbent binary medoid-selection vector ${\bf x}^{\rm bi}$.
			\end{algorithmic} 
		\end{algorithm}
				
	Specifically, for the initialization strategy, since the model is nonconvex, different initial points may lead to substantially different local minima, and hence the quality of initialization can directly affect the final solution. To obtain a reliable starting point, we employ a multi-start initialization scheme combining the farthest-first strategy \cite{le2020farthest,gonzalez1985clustering} with a few $K$-medoids assignment--update passes \cite{ParkJun2009}. For a medoid set $\mathcal M$, we define the nearest-medoid assignment cost as
    \begin{equation}\label{eq:incumbent-assignment}
	    A(\mathcal M)=\sum_{i=1}^{n}\min_{m\in\mathcal M}d_{im}.
    \end{equation}
	Among the resulting candidate medoid sets, the one with the smallest $A(\mathcal M)$ is selected as $\mathcal{M}^{0}$, from which the initial point ${\bf x}^0$ is constructed.
	
	In addition to the initial point, we maintain an incumbent binary solution ${\bf x}^{\rm bi}$ to retain the best binary candidate encountered during the algorithm. For this purpose, we consider two criteria. The first is the original quadratic medoid objective
	\begin{equation}\label{eq:incumbent-original}
		F(\mathcal M)=\beta\sum_{i\in\mathcal M}(D{\bf 1})_i-\frac{\alpha}{2}\sum_{i,j\in\mathcal M}d_{ij},
	\end{equation}
	and the second is the nearest-medoid assignment cost $A(\mathcal M)$ defined in \eqref{eq:incumbent-assignment}. Accordingly, for a prescribed criterion $\chi\in\{\mathrm{obj},\mathrm{assign}\}$, we define the incumbent score as 	\begin{equation}\label{eq:incumbent-score}
		S_\chi(\mathcal M)
		=
		\begin{cases}
			F(\mathcal M), & \chi=\mathrm{obj},\\
			A(\mathcal M), & \chi=\mathrm{assign},
		\end{cases}
	\end{equation}
	Among the refined initialization candidates, the one with the smallest value of $S_\chi$ is selected as the initial incumbent ${\bf x}^{\rm bi}$.
	
	For the penalty parameter $\lambda$, we adopt an adaptive continuation scheme. Starting from an initial value $\lambda_0>0$, a sequence of fixed-penalty subproblems is approximately solved while progressively increasing $\lambda$. For each fixed $\lambda_\ell$, setting $\lambda=\lambda_\ell$ in problem~\eqref{k-meroid-expp} yields the subproblem
	\begin{equation}\label{k-meroid-expp-l}
		\min_{{\bf x}\in C_k}
		g_{\lambda_\ell}({\bf x}).
	\end{equation}
	This subproblem is approximately solved by the BB projected-gradient method with the projected Armijo line search. Specifically, since $D$ is symmetric, the gradient of $g_{\lambda_\ell}$ is $\nabla g_\lambda({\bf x})=\beta D{\bf 1}-\alpha D{\bf x}-2\lambda{\bf x}$.
	At each inner iteration, the projected-gradient update takes the form
	\begin{equation}\label{eq:pg-update}
		{\bf x}^{t+1}=\Pi_{C_K}\left({\bf x}^{t}-\eta_t\nabla g_{\lambda_\ell}({\bf x}^{t})
		\right),
	\end{equation}
	where $\eta_t$ is initialized using a safeguarded BB stepsize and adjusted by a projected Armijo backtracking line search. The projection onto $C_K$ is given by $\Pi_{C_K}({\bf v})=\left[{\bf v}-\theta{\bf 1}\right]_{0}^{1}$, where $\theta\in\mathbb R$ is chosen such that ${\bf 1}^{\top}\left[{\bf v}-\theta{\bf 1}\right]_{0}^{1}=K$.
	Since the left-hand side is monotone in $\theta$, the scalar $\theta$ can be efficiently determined by bisection.
	
	To measure the degree of binarity of the current iterate, we define the binary gap as
	\begin{equation}\label{eq:binary-gap}
		\operatorname{gap}({\bf x})
		:=
		K-\|{\bf x}\|_2^2.
	\end{equation}
	For any ${\bf x}\in C_k$, $\operatorname{gap}({\bf x})\geq0$, with equality if and only if ${\bf x}\in\mathcal U_k^n$. Thus, a smaller gap indicates that the current iterate is closer to satisfying the binary constraint. Following the homotopy strategy for penalty-parameter updates in \cite{shao2020binary,liu2024expp1}, we progressively increase the penalty parameter. To adapt the increase to the degree of binarity of the current iterate, we normalize the gap as
	\begin{equation}\label{eq:normalized-gap}
		r_{\ell+1}:=\frac{\operatorname{gap}({\bf x}^{\ell+1})}{K},
	\end{equation}
	and use it to determine the update factor
	\begin{equation}\label{eq:adaptive-tau}
		\tau_{\ell+1}=
		\begin{cases}
			2, & r_{\ell+1}>0.8,\\
			1.5, & 0.2<r_{\ell+1}\leq0.8,\\
			1.25, & r_{\ell+1}\leq0.2.
		\end{cases}
	\end{equation}
	The penalty parameter is then updated by
	\begin{equation}\label{eq:lambda-continuation}
		\lambda_{\ell+1}
		=
		\min\left\{
		\lambda_{\max},
		\tau_{\ell+1}\lambda_\ell
		\right\}.
	\end{equation}
	Hence, $\lambda$ is increased more aggressively when the current iterate is highly fractional and more moderately as it approaches a binary point. The outer iteration terminates when $\operatorname{gap}({\bf x}^{\ell+1})\leq\varepsilon_{\rm gap}$
	or the current penalty parameter $\lambda_\ell\geq\lambda_{\max}$.
	
	After solving each fixed-penalty subproblem, a feasible binary candidate $\widehat{\bf x}^{\,\ell+1}\in\mathcal U_k^n$ is obtained by setting the $K$ largest components of ${\bf x}^{\ell+1}$ to one and the remaining components to zero. Let $\widehat{\mathcal M}^{\,\ell+1}$ denote the corresponding medoid set. The candidate is then compared with the current incumbent according to $S_\chi$ in \eqref{eq:incumbent-score}, and ${\bf x}^{\rm bi}$ is updated whenever a smaller incumbent score is obtained.

	\subsection{Stage 2: Cluster Formation and Assignment}
	\label{sec:stage2}
	
	Having identified the set of medoids \(\mathcal{M}=\{m_1,\dots,m_K\}\) in Stage~1,
	we now assign each node \(v\in V\) to exactly one cluster.  
	The assignment is based on a two-level distance criterion:
	\begin{enumerate}
		\item Primary: Jaccard distance \(d_{v,m_{k}}\);
		\item Secondary: shortest-path distance \(d_{\mathrm{sp}}(v,m_k)\).
	\end{enumerate}
	Note that, for clarity, both $d_{i,j}$ used here and $d_{ij}$ in Section~\ref{subsubsec:jaccard-distance} denote the Jaccard distance between vertices $i$ and $j$. The Jaccard distance has been introduced in Section~\ref{subsubsec:jaccard-distance}; the shortest-path distance \(d_{\mathrm{sp}}(u,v)\) is defined as the minimum number of edges among all paths connecting \(u\) and \(v\) in the graph. The motivation for this two-level design is that Jaccard distance captures neighborhood overlap, which is a natural similarity measure for graph nodes, while shortest-path distance provides a fallback when Jaccard distances are non-discriminative.
	
	Concretely, for a given vertex \(v\), we define the set of closest medoids under Jaccard distance:
	\[
	\mathcal{M}^*(v) = \arg\min_{m_k\in\mathcal{M}} d_{v,m_{k}}.
	\]
	If \(|\mathcal{M}^*(v)| = 1\), we assign \(v\) to that unique cluster. If \(|\mathcal{M}^*(v)| > 1\), we assign \(v\) to the cluster whose medoid minimizes the shortest-path distance:
	\[
	c(v) = \arg\min_{m_k\in\mathcal{M}^*(v)} d_{\mathrm{sp}}(v,m_k).
	\]
	This rule is applied independently for every node; the resulting partition defines the final clusters \(C_1,\dots,C_K\).
	
	\subsubsection{Illustrative Example: Why a Secondary Criterion is Necessary}
	\label{sec:tie-example}
	
		\begin{figure}[htp]
		\centering
		\includegraphics[width=0.9\textwidth]{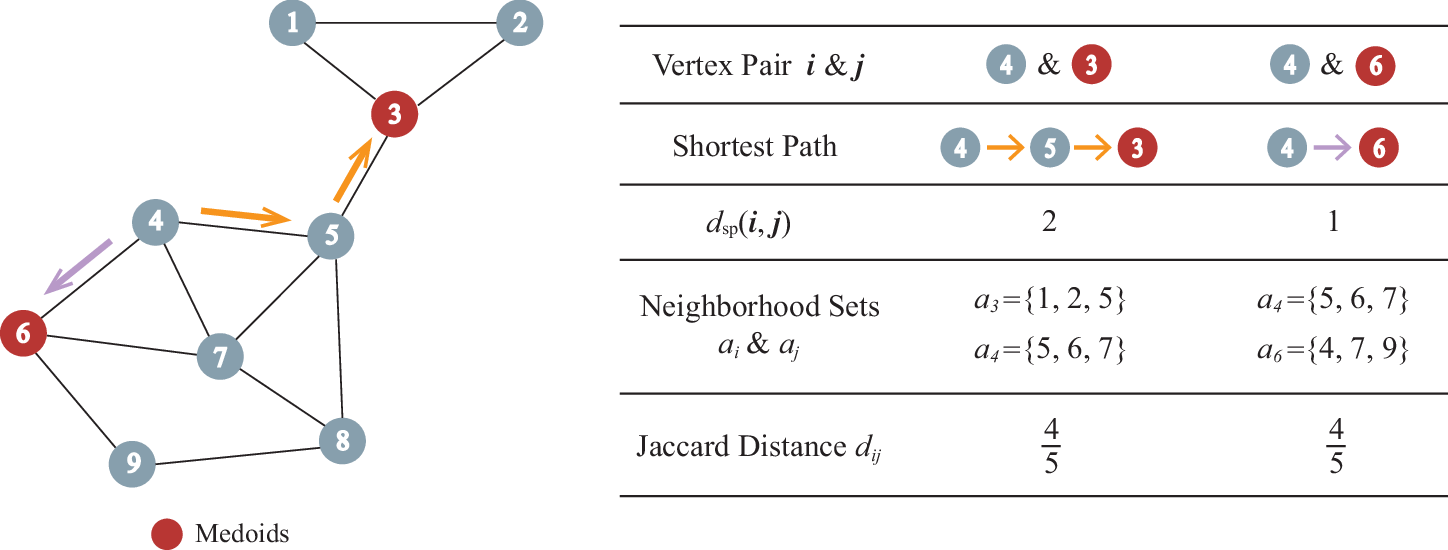}\\
		\caption{Example graph and related metric.}
		\label{fig:tie-example}
	\end{figure}
	
	Consider the graph fragment shown in Figure~\ref{fig:tie-example}. Medoids are vertices \(3\) and \(6\), and we need to assign vertex \(4\). As indicated by the table on the right-hand side of the figure, the neighborhood sets of vertices $3$, $4$, and $6$ are $\{1,2,5\}$, $\{5,6,7\}$, and $\{4,7,9\}$ respectively. The Jaccard distance between vertices $4$ and $3$ is computed as 
	$$d_{34}=1-\frac{|\{1,2,5\}\cap\{5,6,7\}|}{|\{1,2,5\}\cup\{5,6,7\}|}=\frac{4}{5},$$
	and the distance between vertices $4$ and $6$ is $$d_{46}=1-\frac{|\{5,6,7\}\cap\{4,7,9\}|}{|\{5,6,7\}\cup\{4,7,8\}|}=\frac{4}{5}.$$
	Then the Jaccard distances satisfy \(d_{34}=d_{46}\). However, the shortest-path distances are \(d_{\mathrm{sp}}(3,4)=2\) and \(d_{\mathrm{sp}}(4,6)=1\). Using only Jaccard distance would lead to an arbitrary choice, whereas incorporating shortest-path distance correctly places \(4\) in \(6\)'s cluster, which is topologically closer. This example motivates the two-level rule in our algorithm.

	
	\section{Numerical Experiments}
	
	In this section, we evaluate the performance of our algorithm on different data sets. We compare the solution quality of our algorithm TSCM, the Boltzmann machine and Gurobi run on a 4-core/8-thread machine. All experiments are performed in Windows 10 on an Intel(R) Core(TM) i7-1065G7 CPU at 1.30 GHZ with 16.0 GB of RAM. The code for all methods is written in MATLAB R2022a, and all graphs are generated using Python 3.82
	. To conclude, we find that our method demonstrates the ability to recover clusters not only in synthetic graphs but also in real-world graphs.
	
	\subsection{Evaluation Measures}
	\label{subsec:evaluation-setup}
	For the experiments in this chapter, we use ARI and density metrics as complementary measures: density assesses the structural quality of the resulting partition, and ARI assesses the label recovery accuracy relative to the true partition.
	\paragraph{Density-based Structural Measures}
	To evaluate the quality of a graph clustering result, we use three density-based quantities: the overall density, the intra-cluster density, and the inter-cluster density. 
	As shown in \cite{cq-1,cq-2,Teng_2025}, these density-based
	criteria provide an effective way to assess clustering quality and can be more informative than the widely used modularity measure \cite{cq-3}.
	

	Suppose that the
	vertices in graph $G$ are partitioned into $K$ clusters. Denote by $n^{(i)}$ the number
	of vertices in cluster $i$. Let $|E_{ii}|$ be the number of edges whose two
	endpoints both belong to cluster $i$, and let $|E_{ij}|$ be the number of
	edges connecting a vertex in cluster $i$ and a vertex in cluster $j$,
	where $i\neq j$.
	
	The overall density of the graph is defined as
	\begin{equation}\label{eq:overall-density}
		\kappa
		=
		\frac{|E(G)|}{0.5\times n(n-1)}.
	\end{equation}
	For cluster $i$, the intra-cluster density is defined by $\kappa_{\mathrm{intra}}^{(i)}=|E_{ii}|/\left(0.5\times n^{(i)}(n^{(i)}-1)\right)$.
	For two distinct clusters $i$ and $j$, the inter-cluster density is defined by $\kappa_{\mathrm{inter}}^{(ij)} = |E_{ij}|/\left(n^{(i)}n^{(j)}\right)$.
	
	The mean intra-cluster density and the mean inter-cluster density are then given by
	\begin{equation}\label{eq:mean-density}
		\bar{\kappa}_{\mathrm{intra}}
		=
		\frac{1}{K}
		\sum_{i=1}^{K}
		\kappa_{\mathrm{intra}}^{(i)},
		\qquad
		\bar{\kappa}_{\mathrm{inter}}
		=
		\frac{1}{\frac{1}{2}K(K-1)}
		\sum_{i=1}^{K}
		\sum_{j=i+1}^{K}
		\kappa_{\mathrm{inter}}^{(ij)} .
	\end{equation}
	
	
	These quantities characterize how densely connected the vertices are within clusters and how sparsely connected different clusters are. 
	A high quality clustering groups vertices into clusters such that, on average, the links between vertices within these clusters are denser than the links between vertices in different clusters \cite{cq-1,cq-2}.
	Therefore, a high-quality clustering is
	expected to satisfy
	\begin{equation}\label{cq-standard}
		\bar{\kappa}_{\mathrm{inter}}
		<
		\kappa
		<
		\bar{\kappa}_{\mathrm{intra}}.
	\end{equation}
	This criterion will be used in our numerical experiments to assess the
	quality of the obtained clustering results.
	
	\begin{figure}[htp]
		\centering
		\includegraphics[width=1\textwidth]{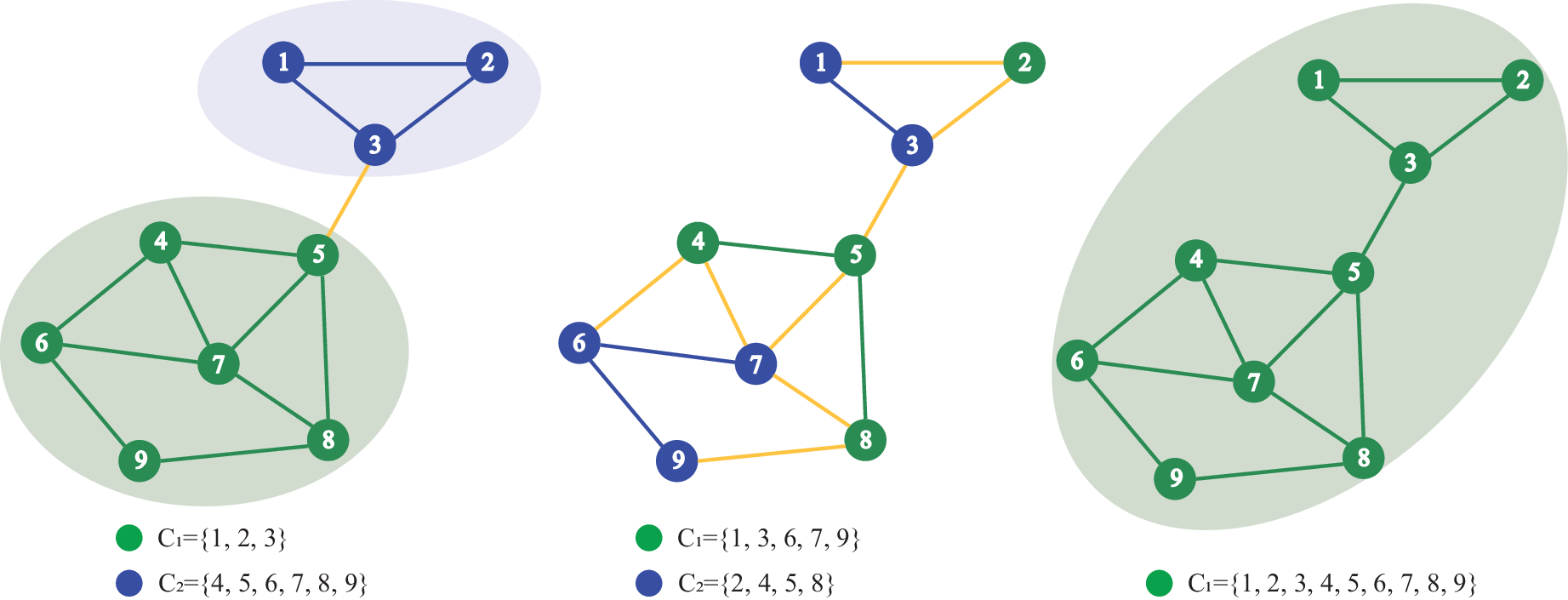}\\
		\caption{Example graph and related metric.}
		\label{fig:evaluation-example}
	\end{figure}
	
	To better understand inequality \eqref{cq-standard} satisfied by the above evaluation metrics, consider the network in  Figure~\ref{fig:evaluation-example}, which has three subfigures (left, middle, right) showing different partitions of the same network. Green edges are inside community $C_{1}$, blue edges inside $C_{2}$, and yellow edges are between communities. The overall density is
	\begin{equation*}
		\kappa=\frac{13}{0.5\times9\times8}\approx0.36.
	\end{equation*}
	
	In the left subfigure, the network is partitioned into $C_{1}=\{1,2,3,4,5\}$ and $C_{2}=\{6,7,8,9\}$. The intra-cluster densities are 
	\begin{equation*}
		\kappa^{C_{1}}_{intra}=\frac{3}{0.5\times3\times2}=1,\ \kappa^{C_{2}}_{intra}=\frac{9}{0.5\times9\times8}=\frac{1}{4}.
	\end{equation*}
	so $\bar{\kappa}_{intra}=(1+1/4)/2\approx0.63$.
	The inter-cluster density is $\bar{\kappa}_{inter}=1/(3\times6)\approx0.06$. This partition satisfies $\bar{\kappa}_{\mathrm{inter}}<\kappa<\bar{\kappa}_{\mathrm{intra}}$, thus meeting inequality \eqref{cq-standard}.
	
	In the middle subfigure, the partition is $C_{1}=\{1, 3, 6, 7, 9\}$ and $C_{2}=\{2, 4, 5, 8\}$. Then
	\begin{equation*}
		\kappa^{C_{1}}_{intra}=\frac{3}{0.5\times5\times4}=0.3,\ \kappa^{C_{2}}_{intra}=\frac{2}{0.5\times4\times3}=\frac{1}{3},
	\end{equation*}
	so $\bar{\kappa}_{intra}\approx0.32$,
	and $\bar{\kappa}_{inter}=8/(5\times4)=0.4$. Hence $\bar{\kappa}_{intra}<\kappa<\bar{\kappa}_{inter}$, violating \eqref{cq-standard}.
	
	In the right subfigure, all vertices are grouped into one community $C_{1}=\{1,2,3,4,5,6,7,8,9\}$. Then $\bar{\kappa}_{intra}=\kappa\approx0.36$ and $\bar{\kappa}_{inter}=0$, so $\bar{\kappa}_{inter}<\kappa=\bar{\kappa}_{intra}$, also failing the criterion.
	
	\paragraph{An illustrative example and limitations of density measures}
	
	Next, we use a small example to illustrate a behavior of density-based metrics, revealing the inadequacy of relying solely on such metrics. For the ground-truth partition (the leftmost partition in Figure~\ref{fig:evaluation-example}), the mean intra-cluster density is $0.625$, while the mean inter-cluster density is about $0.0556$. Starting from a partition with four misclassified vertices, we gradually correct the assignments, as shown in the Figure~\ref{fig:evaluation-example2}. 
	\begin{figure}[htp]
		\centering
		\includegraphics[width=1\textwidth]{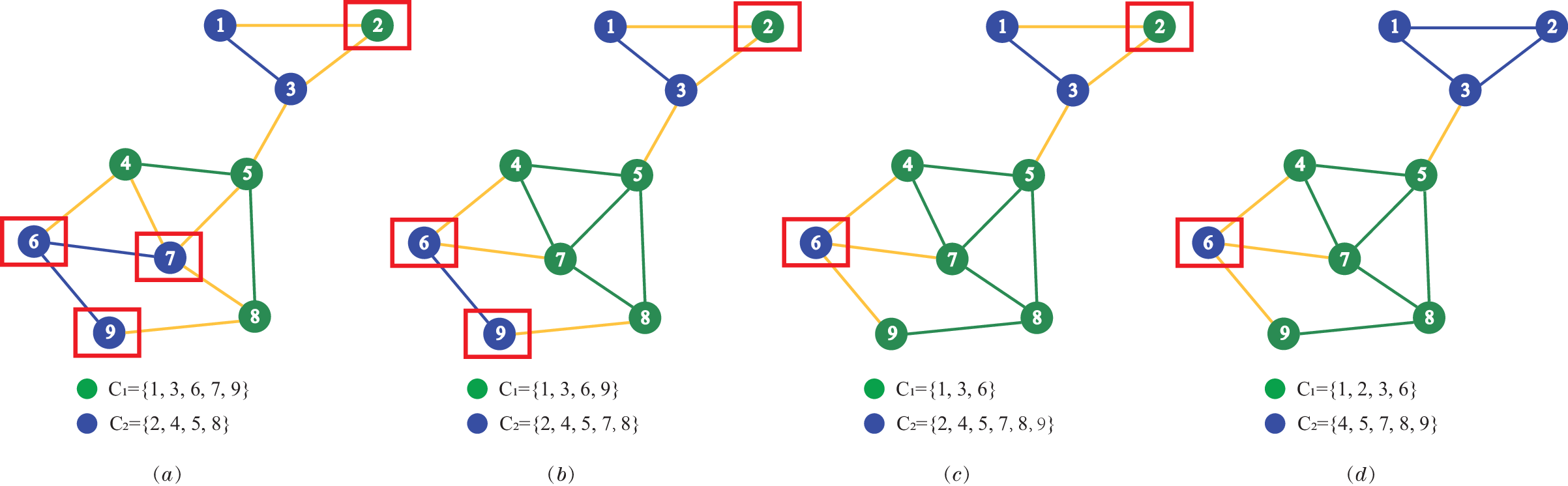}\\
		\caption{Four partitions with decreasing numbers of misclassified vertices: (a) 4 misclassifications, (b) 3, (c) 2, and (d) 1..}
		\label{fig:evaluation-example2}
	\end{figure}
	The resulting density values are listed in Table~\ref{tab:density-misclassification}. $N_{\text{mis}}$ is the number of misclassified vertices.
\begin{table}[htbp] 
	\centering 
	\caption{Mean intra-cluster and inter-cluster densities under different numbers of misclassified vertices.} \label{tab:density-misclassification} 
	\begin{tabular*}{0.9\textwidth}{@{\extracolsep{\fill}}cccccc}
			\toprule
            Metric & Figure~\ref{fig:evaluation-example2}(a) & Figure~\ref{fig:evaluation-example2}(b)  & Figure~\ref{fig:evaluation-example2}(c) & Figure~\ref{fig:evaluation-example2}(d) & Left subfigure of Figure~\ref{fig:evaluation-example} \\
            \midrule
            $N_{\text{mis}}$ & 4 & 3 & 2 & 1 & 0 \\
            \midrule
            $\bar{\kappa}_{\mathrm{intra}}$ & 0.3167 & 0.4167 & 0.3667 & 0.5500 & 0.6250 \\
            \midrule
            $\bar{\kappa}_{\mathrm{inter}}$ & 0.4000 & 0.3000 & 0.3333 & 0.2000 & 0.0556 \\
            \bottomrule
		\end{tabular*} 
	\end{table}
	
	The example shows that neither the mean intra-cluster density nor the mean inter-cluster density is necessarily monotone with respect to the number of misclassified vertices. In particular, the partition with two misclassified vertices has a lower mean intra-cluster density and a higher mean inter-cluster density than the partition with three misclassified vertices. This occurs because different vertices contribute unequally to the internal connectivity of a cluster. Misclassifying a structurally important vertex with a large within-community degree may remove more intra-cluster edges than misclassifying several peripheral vertices. Moreover, the normalization terms in the density measures change with the cluster sizes. Consequently, a partition with fewer misclassified vertices may occasionally have less favorable density values. 
	
	Therefore, closeness to the density values associated with the ground-truth partition alone is insufficient for determining the quality of an estimated partition. The density measures describe the structural connectivity of the detected clusters, but they do not directly measure whether individual vertices are assigned to their correct communities.
	
	\paragraph{Adjusted Rand index}
	
	The density-based measures evaluate the cohesion and separation of the detected clusters from the graph-structure perspective. For synthetic graphs, where the ground-truth community labels are available, we further employ the adjusted Rand index (ARI) \cite{hubert1985comparing} to evaluate the agreement between the detected and ground-truth partitions.

    The ARI is a chance-corrected version of the Rand index \cite{rand1971objective} and is invariant under permutations of cluster labels; moreover, the ground-truth and detected partitions need not contain the same number of clusters. An ARI equal to $1$ indicates exact agreement between the two partitions, a value close to $0$ indicates an agreement comparable to that expected by chance, and a negative value indicates an agreement lower than the random expectation. Further properties of the ARI can be found in \cite{steinley2004properties}.
	
	Consequently, the density measures and the ARI provide complementary information. The former describe the internal connectivity and separation of the detected clusters, whereas the latter directly evaluates the recovery of the ground-truth vertex assignments.

	\subsection{Synthetic Graphs}
	
	In this section, we conduct experiments on 12 different synthetic graphs, generated using two distinct models: the Planted Partition Model (PPM) \cite{PPM} and the Stochastic Block Model (SBM) \cite{SBM}. These graphs have known clustering properties, with details provided in Table~\ref{table-1}.
	
	\begin{table}[H]
		\centering
		\caption{Details of nine synthetic graphs.}
		\label{table-1}
		\begin{tabular*}{0.9\textwidth}{@{\extracolsep{\fill}}ccccccc}
			\toprule
			Model type & Graph name & ${\rm P_{intra}}$ & ${\rm P_{inter}}$ & $K$  & $n$    & Cluster sizes \\ \midrule
			PPM        & G1\_PPM    & 0.90          & 0.100          & 5  & 250  & 50            \\ \midrule
			PPM        & G2\_PPM    & 0.85         & 0.150         & 5  & 250  & 50            \\ \midrule
			PPM        & G3\_PPM    & 0.80          & 0.200          & 5  & 250  & 50            \\ \midrule
			SBM        & G1\_SBM    & 0.90          & 0.050         & 30 & 3000 & {[}25,200{]}  \\ \midrule
			SBM        & G2\_SBM    & 0.90          & 0.075         & 30 & 3000 & {[}25,200{]}  \\ \midrule
			SBM        & G3\_SBM    & 0.90          & 0.100         & 30 & 3000 & {[}25,200{]}  \\ \midrule
			SBM        & G4\_SBM   & 0.85         & 0.050         & 30 & 3000 & {[}25,200{]}  \\ \midrule
			SBM        & G5\_SBM   & 0.85         & 0.075         & 30 & 3000 & {[}25,200{]}  \\ \midrule
			SBM        & G6\_SBM    & 0.85         & 0.100          & 30 & 3000 & {[}25,200{]}  \\ \midrule
			SBM        & G7\_SBM    & 0.80          & 0.050         & 30 & 3000 & {[}25,200{]}  \\ \midrule
			SBM        & G8\_SBM    & 0.80          & 0.075         & 30 & 3000 & {[}25,200{]}  \\ \midrule
			SBM        & G9\_SBM    & 0.80          & 0.100          & 30 & 3000 & {[}25,200{]}  \\ \bottomrule
		\end{tabular*} 
	\end{table}%
	
	First, we test on three PPM graph models, each containing $250$ vertices and $5$ clusters of $50$ vertices each. Second, we test on nine SBM graph models—each comprising $3000$ vertices and $30$ clusters, with cluster sizes ranging from $25$ to $200$. These graphs are generated with the following intra-/inter-cluster edge probability parameters (${\rm P_{intra}}$/${\rm P_{inter}}$) as shown in Table~\ref{table-1}. For each intra-cluster edge probability, we generate graphs using two different inter-cluster edge probabilities. All synthetic graphs are generated using the NetworkX library \cite{gc-no26} in Python. As the intra-cluster edge probability decreases, the inter-cluster edge probability increases, and the cluster sizes vary, the clustering task becomes more challenging; concurrently, an increase in the number of vertices further escalates the computational difficulty.
	
	Given that our proposed framework consists of two stages, while the compared methods (Gurobi and the Boltzmann machine) are only used to solve the first stage, we divide our experimental comparisons into two clear levels for a comprehensive and fair evaluation:
	
	(1) \textbf{Optimization Performance of Medoid Selection} At this level, we directly compare the mathematical optimization capabilities of each method in solving Model (1). Specifically, we record and compare the objective function values $f(\bf{x})$ obtained by Gurobi, the Boltzmann machine, and our proposed PCPG method under the same time limit, along with the solution time (in seconds). This metric objectively measures the accuracy and efficiency of each optimizer in selecting a high‑quality Medoid set within the combinatorial space, independent of the subsequent assignment step.
	
	(2) \textbf{Clustering Quality after Unified Assignment} After obtaining the binary vectors $\bf{x}$ from all methods, we uniformly apply the same second‑stage nearest‑Medoid assignment strategy proposed in this paper to produce the final cluster partitions. Since the second stage is identical for all compared methods, the subsequent clustering quality metrics purely reflect the quality of the Medoid sets selected in the first stage. At this level, we adopt two complementary metrics to assess clustering quality:
	\begin{itemize}
	\item Since PPM and SBM graphs are generated with explicit intra-cluster edge probability ${\rm P_{intra}}$ and inter-cluster edge probability ${\rm P_{inter}}$, we first employ the mean intra-cluster density $\bar{\kappa}_{\mathrm{intra}}$ and mean inter-cluster density $\bar{\kappa}_{\mathrm{inter}}$ to assess how well the algorithms recover the original generative mechanism. Ideally, $\bar{\kappa}_{\mathrm{intra}}$ should approximate ${\rm P_{intra}}$, and $\bar{\kappa}_{\mathrm{inter}}$ should approximate ${\rm P_{inter}}$ \cite{miasnikof2024graph}. In the results presentation, we use $\epsilon_{\text{intra}}$ (or $\epsilon_{\text{inter}}$) to evaluate clustering quality. Here, $\epsilon_{\text{intra}}$ (or $\epsilon_{\text{inter}}$) denotes the absolute difference between the algorithmically computed $\bar{\kappa}_{\mathrm{intra}}$ (or $\bar{\kappa}_{\mathrm{inter}}$) and the true graph values ${\rm P_{intra}}$ (or ${\rm P_{inter}}$). We consider smaller values of this metric to indicate better clustering structure quality.
	
	\item As described in Section~\ref{subsec:evaluation-setup}, to complement the density-based structural measures, we also use the Adjusted Rand Index (ARI) to evaluate the agreement between the estimated partition and the ground-truth partition. Higher ARI values indicate better recovery of the true communities, with an ARI of 1 corresponding to perfect recovery.
	
	\end{itemize}
	
	For all PPM and SBM instances, we employ the same PCPG framework, including the multi-start farthest-first initialization, short K-medoids initialization refinement, adaptive penalty continuation, and BB projected-gradient inner solver. The two graph models differ only in the criterion used to retain the incumbent binary solution. For the PPM instances, the incumbent is selected according to the original quadratic medoid objective, whereas for the SBM instances, the nearest-medoid assignment cost is used. The latter criterion is consistent with the second stage of the clustering procedure, in which each vertex is assigned to its nearest selected medoid.
	
\subsubsection{Results on PPM Graph Models}
In this section, we evaluate all competing methods on the three PPM-generated graphs. 
These graphs are relatively small and balanced, allowing us to initially verify the fundamental 
effectiveness of each optimizer without the interference of scale imbalance. For a given test instance, all three methods use the same values of $\alpha$ and $\beta$. Specifically, for PPM1 and PPM2, we set $\beta=K/n$, $\alpha=4\beta$, whereas for PPM3, we use  \begin{equation} \label{eq: ppm3-alpha} 
	\beta=\frac{K}{n}, \qquad \alpha= \frac{2n}{K-1}\beta. 
\end{equation} 

\paragraph{Optimization Performance of Medoid Selection}

To ensure a fair comparison, TSCM, Gurobi, and BM are treated as alternative solvers for the medoid selection problem in the first stage. The raw objective values $f(\bf{x})$ reported in Table~\ref{tab:compare-PPM-1} are computed by substituting the final binary medoid vector into \eqref{eq:kmedoids-binary}. 

	\begin{table}[ht]
	\centering
	\caption{Comparison of optimization performance on PPM models with $n=250$ and $K=5$.}
	\label{tab:compare-PPM-1}
	\setlength{\tabcolsep}{4.5pt} 
		\begin{tabular*}{0.9\textwidth}{@{\extracolsep{\fill}}ccccccc}
			\toprule
			\multirow{2.4}{*}{method} & \multicolumn{2}{c}{G1\_PPM} & \multicolumn{2}{c}{G2\_PPM} & \multicolumn{2}{c}{G3\_PPM} \\
			\cmidrule(r){2-3} \cmidrule(r){4-5} \cmidrule(r){6-7} 
			& $f(\bf{x})$& CPU time(s)     & $f(\bf{x})$ & CPU time(s) &  $f(\bf{x})$ & CPU time(s) \\
			\midrule
			TSCM & 20.1034 &  0.17  & 19.4492 & 0.18 & -1.7343 & 0.19\\
			\midrule
			Gurobi & 19.9031& 0.45 & 19.4492 & 1.76 & -1.7925 & 0.25 \\
			\midrule
			BM & 19.9090& 0.10 & 19.4570 & 0.11  & -1.8038 & 0.08\\
			\bottomrule
		\end{tabular*}%
	%
\end{table}%

The optimization results on the three PPM instances are reported in Table~\ref{tab:compare-PPM-1}. Since  problem~\eqref{eq:kmedoids-binary} is a minimization problem, a smaller value of $f(\bf{x})$ indicates a better feasible solution. The results show that the PCPG-based TSCM provides a favorable balance between solution quality and computational efficiency. 
On G1\_PPM, TSCM obtains an objective value of $20.1034$, compared with $19.9031$ for Gurobi and $19.9090$ for BM. Although the objective value obtained by TSCM is slightly larger than the best-known value, its relative deviation is only about $1.01\%$. Moreover, TSCM requires only $0.17$s, substantially less than the $0.45$s required by Gurobi.
On G2\_PPM, TSCM and Gurobi attain the same best-known objective value of $19.4492$, while BM gives a slightly larger value of $19.4570$. More importantly, TSCM requires only $0.18$s, whereas Gurobi takes $1.76$s, demonstrating a substantial computational advantage while preserving the same objective quality.
On the more challenging G3\_PPM instance, BM obtains the smallest objective value of $-1.8038$, followed by Gurobi with $-1.7925$ and TSCM with $-1.7343$. Thus, TSCM does not achieve the best Stage~1 objective value on this instance. Nevertheless, as shown by the clustering results in Table~\ref{tab:compare-PPM-2}, this difference in objective value does not lead to a deterioration in the final graph partition. These results indicate that the PCPG-based solver emphasizes computational efficiency and the identification of sufficiently high-quality medoids rather than the pursuit of marginal improvements in the Stage~1 objective value.

\paragraph{Clustering Quality after Unified Assignment}
	
After the first stage, the medoid sets obtained by the three methods are converted into graph partitions via the identical nearest‑medoid assignment rule in the second stage. In this subsection, we evaluate the clustering quality of the final graph partitions using the ARI, $\epsilon_{intra}$ and $\epsilon_{inter}$, and also report the total running time ${\rm T_{total}}$.

	\begin{table}[ht]
		\centering
		\caption{Comparison of some clustering metrics on PPM models with $n = 250$ and $K = 5$.}
		\label{tab:compare-PPM-2}%
		\setlength{\tabcolsep}{3.7pt} 
			\begin{tabular*}{0.91\textwidth}{@{\extracolsep{\fill}}ccccccccccccc}
				\toprule
				\multirow{2.4}{*}{method} & \multicolumn{4}{c}{G1\_PPM} & \multicolumn{4}{c}{G2\_PPM} & \multicolumn{4}{c}{G3\_PPM} \\
				\cmidrule(r){2-5} \cmidrule(r){6-9} \cmidrule(r){10-13} 
				& $\epsilon_{intra}$ & $\epsilon_{inter}$ & ARI & ${\rm T_{total}}$(s)     & $\epsilon_{intra}$ & $\epsilon_{inter}$ & ARI  & ${\rm T_{total}}$(s) & $\epsilon_{intra}$ & $\epsilon_{inter}$ & ARI  & ${\rm T_{total}}$(s) \\
				\midrule
				TSCM & 0 & 0& 1  & 0.20  & 0 & 0& 1 & 0.21 & 0 & 0& 1 & 0.23  \\
				\midrule
				Gurobi & 0 & 0& 1 &  0.46 & 0 & 0& 1 & 1.79 & 0 & 0& 1 & 0.26 \\
				\midrule
				BM & 0  & 0& 1 & 0.14  &0 & 0& 1 & 0.14  & 0.0074 & 0.0018& 0.9800 & 0.09 \\
				\bottomrule
			\end{tabular*}%
		
	\end{table}%
	
	The clustering results on the three PPM instances are reported in Table~\ref{tab:compare-PPM-2}. On both G1\_PPM and G2\_PPM, all three methods exactly recover the planted community structure, as indicated by $\epsilon_{intra}=\epsilon_{inter}=0$ and ARI=$1$. Although BM requires the least computational time on both instances, TSCM also achieves exact recovery with a substantially lower computational cost than Gurobi. 	
	The difference among the methods becomes more evident on G3\_PPM. Both TSCM and Gurobi exactly recover the ground-truth partition, yielding $\epsilon_{intra}=\epsilon_{inter}=0$ and ARI=$1$. In contrast, BM yields $\epsilon_{intra}=0.0074$, $\epsilon_{inter}=0.0018$, and ARI=$0.9800$, indicating a small number of misclassified vertices. Although BM remains the fastest method, TSCM achieves exact recovery with a total computation time of only $0.23$s, which is also slightly lower than that of Gurobi. It is also worth noting that the clustering results produced by all three methods on all three PPM instances satisfy the clustering-quality condition in \eqref{cq-standard}, indicating that the resulting partitions consistently preserve the expected separation between intra-cluster and inter-cluster connectivity.
	
	An interesting observation can be made by jointly considering Tables~\ref{tab:compare-PPM-1} and~\ref{tab:compare-PPM-2}. In particular, on G3\_PPM, BM obtains a smaller Stage~1 objective value than TSCM, whereas TSCM produces an exact final partition and BM does not. This demonstrates that the objective value of the medoid-selection problem and the final clustering accuracy are not necessarily monotonically related. Consequently, it is important to evaluate the proposed framework from both the optimization and clustering perspectives, rather than relying solely on the objective value of the medoid-selection model.
	
	\begin{remark}
		We note that changing the value of $\alpha$ directly alters the objective function itself; hence, objective values obtained under different $\alpha$ settings are not directly comparable. Specifically, under the common parameter setting $\alpha=\alpha_{0}=(2n/K-1)\beta$, the BM method yields an objective value of $-1.8038$, with corresponding clustering metrics $\epsilon_{intra}=0.0074$, $\epsilon_{inter}=0.0018$, and {\rm ARI=}$0.9800$. When $\alpha$ is increased to $1.5\alpha_{0}$, BM produces better clustering results, namely $\epsilon_{intra}=\epsilon_{inter}=0$ and {\rm ARI=}$1$, and the corresponding objective value becomes $-13.0036$. It should be emphasized that this value cannot be directly compared with the objective values and clustering metrics reported in Table~\ref{tab:compare-PPM-1} and Table~\ref{tab:compare-PPM-2}, as they are computed under different $\alpha$ values. Therefore, the results obtained with $1.5\alpha_{0}$
		are used solely to illustrate the sensitivity of clustering performance to the model parameter and are not included in the main optimization comparison. Moreover, BM requires a larger $\alpha$ to reach optimality, which also indirectly reflects the robustness of our method.
	\end{remark}
	
	\subsubsection{Results on SBM Graph Models}
	In this section, we extend our evaluation to SBM graph models. Compared to the PPM-generated graphs, SBM graphs introduce more structured community patterns with heterogeneous intra- and inter-community edge probabilities, allowing us to further examine the optimizers' adaptability to varying levels of community clarity and structural complexity. For SBM Graph Models, we set $\beta=K/n$, $\alpha=(2n/K-1)\beta$. 
	
	\paragraph{Optimization Performance of Medoid Selection}
	
	For the primary comparison on the SBM instances, TSCM and BM are terminated according to their respective stopping criteria, while Gurobi is assigned a time limit of \(100\)s. This time limit is chosen because BM typically requires approximately \(100\)s to complete, whereas TSCM usually terminates substantially earlier. Thus, the \(100\)s limit provides a common upper computational budget for evaluating the quality of the medoid-selection solutions. For each method, we report the objective value \(f(\mathbf{x})\) of the original medoid-selection model together with the actual Stage~1 computation time.
	
		\begin{table}[ht]
		\centering
		\caption{Comparison of optimization performance on SBM models with $n=3000$ and $K=30$.}
		\label{tab:compare-sbm-1}
		\setlength{\tabcolsep}{4.5pt} 
			\begin{tabular*}{0.9\textwidth}{@{\extracolsep{\fill}}ccccccc}
				\toprule
				\multirow{2.4}{*}{method} & \multicolumn{2}{c}{G1\_SBM} & \multicolumn{2}{c}{G2\_SBM} & \multicolumn{2}{c}{G3\_SBM} \\
				\cmidrule(r){2-3} \cmidrule(r){4-5} \cmidrule(r){6-7} 
				& $f(\bf{x})$& CPU time(s)     & $f(\bf{x})$ & CPU time(s) &  $f(\bf{x})$ & CPU time(s) \\
				\midrule
				TSCM &-9.8059 &  8.22  & -6.5394 & 6.16 &  -5.5534 & 5.49\\
				\midrule
				Gurobi & -7.9869 & 102.07 & -8.1178 & 102.74 & -8.9117 & 110.10 \\
				\midrule
				BM & -12.4853 & 81.53  &-11.2006 & 82.55  & -10.9057 & 82.08\\
				\bottomrule
			\end{tabular*}%
		%
		\centering
		\setlength{\tabcolsep}{4.5pt} 
			\begin{tabular*}{0.9\textwidth}{@{\extracolsep{\fill}}ccccccc}
				\toprule
				\multirow{2.4}{*}{method} & \multicolumn{2}{c}{G4\_SBM} & \multicolumn{2}{c}{G5\_SBM} & \multicolumn{2}{c}{G6\_SBM} \\
				\cmidrule(r){2-3} \cmidrule(r){4-5} \cmidrule(r){6-7} 
				& $f(\bf{x})$& CPU time(s)     & $f(\bf{x})$ & CPU time(s) &  $f(\bf{x})$ & CPU time(s) \\
				\midrule
				TSCM & -9.5523 &  6.44  & -7.0040 &  5.00 & -4.1439 & 4.83\\
				\midrule
				Gurobi & -5.5601 & 108.96  & -6.5012 & 101.68 & -8.5019 & 103.37\\
				\midrule
				BM & -11.6005 & 81.41  & -10.1662 &  82.59   & -10.2101 & 81.39 \\
				\bottomrule
			\end{tabular*}%
		%
		\centering
		\setlength{\tabcolsep}{4.5pt} 
			\begin{tabular*}{0.9\textwidth}{@{\extracolsep{\fill}}ccccccc}
				\toprule
				\multirow{2.4}{*}{method} & \multicolumn{2}{c}{G7\_SBM} & \multicolumn{2}{c}{G8\_SBM} & \multicolumn{2}{c}{G9\_SBM} \\
				\cmidrule(r){2-3} \cmidrule(r){4-5} \cmidrule(r){6-7} 
				& $f(\bf{x})$& CPU time(s)     & $f(\bf{x})$ & CPU time(s) &  $f(\bf{x})$ & CPU time(s) \\
				\midrule
				TSCM & -8.1797 & 5.40 & -5.8455 & 4.55 & -4.5613 & 5.16 \\
				\midrule
				Gurobi & -7.1344 & 100.82  & -6.8090 & 109.99 & -7.8358 & 101.56 \\
				\midrule
				BM & -11.2221 & 82.62 & -9.6284 & 81.85  & -9.6435 & 82.68 \\
				\bottomrule
			\end{tabular*}%
		%
	\end{table}%
	
	Table~\ref{tab:compare-sbm-1} reports the optimization performance of the three methods on the nine SBM instances. BM attains the smallest objective value on all nine instances, whereas TSCM and Gurobi exhibit instance-dependent behavior. In particular, TSCM obtains a smaller objective value than Gurobi on G1\_SBM, G4\_SBM, G5\_SBM, and G7\_SBM, while Gurobi performs better in terms of the objective value on the remaining instances.
	
	The main advantage of TSCM at the medoid-selection stage lies in its computational efficiency. Its average Stage~1 computation time over the nine instances is approximately $5.70$s, compared with $82.08$s for BM and $104.59$s for Gurobi. Thus, TSCM is approximately $14.4$ times faster than BM and $18.4$ times faster than Gurobi on average. These results indicate that, although TSCM does not necessarily minimize the Stage~1 objective as aggressively as BM, it can identify feasible medoid sets at a substantially lower computational cost. The practical quality of these medoid sets is further examined through the unified assignment procedure below.
	
	\paragraph{Clustering Quality after Unified Assignment}
	
	Similarly, to further assess the practical effect of the selected medoids, the solutions obtained by TSCM, BM, and Gurobi under the above computational protocol are passed through the same nearest-medoid assignment procedure. We report the resulting ARI, the intra-cluster and inter-cluster density errors, and the total computation time, including both medoid selection and the unified assignment stage. Since the assignment rule is identical for all methods, differences in the final clustering quality can be attributed primarily to the medoid sets identified in Stage~1.

			\begin{table}[ht]
			\centering
			\caption{Comparison of some clustering metrics on SBM models with $n = 3000$ and $K = 30$.}
			\label{tab:compare-SBM-2}%
			\setlength{\tabcolsep}{3.5pt} 
				\begin{tabular*}{0.9\textwidth}{@{\extracolsep{\fill}}cccccccccc}
					\toprule
					\multirow{2.4}{*}{method} & \multicolumn{3}{c}{G1\_SBM} & \multicolumn{3}{c}{G2\_SBM} & \multicolumn{3}{c}{G3\_SBM} \\
					\cmidrule(r){2-4} \cmidrule(r){5-7} \cmidrule(r){8-10} 
					& $\epsilon_{intra}$ & $\epsilon_{inter}$  & ${\rm T_{total}}$(s)     & $\epsilon_{intra}$ & $\epsilon_{inter}$  & ${\rm T_{total}}$(s) & $\epsilon_{intra}$ & $\epsilon_{inter}$  & ${\rm T_{total}}$(s) \\
					\midrule
					TSCM & 0.0022 & 0.0002 & 8.31  & 0.0644 & 0.0064  & 6.20 & 0.0822 & 0.0075  & 5.51  \\
					\midrule
					Gurobi & 0.6522 & 0.0320 & 103.08 & 0.5531 & 0.0267  & 104.66 & 0.6055 & 0.0290 & 111.26 \\
					\midrule
					BM & 0  & 0.0003 & 81.55 & 0.0050 & 0.0005 & 82.58  & 0.0595 & 0.0038  & 82.56\\
					\bottomrule
				\end{tabular*}%
				\centering
				\setlength{\tabcolsep}{3.5pt} 
			\begin{tabular*}{0.9\textwidth}{@{\extracolsep{\fill}}cccccccccc}
				\toprule
				\multirow{2.4}{*}{method} & \multicolumn{3}{c}{G4\_SBM} & \multicolumn{3}{c}{G5\_SBM} & \multicolumn{3}{c}{G6\_SBM} \\
				\cmidrule(r){2-4} \cmidrule(r){5-7} \cmidrule(r){8-10} 
				& $\epsilon_{intra}$ & $\epsilon_{inter}$  & ${\rm T_{total}}$(s)     & $\epsilon_{intra}$ & $\epsilon_{inter}$  & ${\rm T_{total}}$(s) & $\epsilon_{intra}$ & $\epsilon_{inter}$  & ${\rm T_{total}}$(s)  \\
				\midrule
				TSCM & 0.0014 & 0.0002 & 6.46  & 0.0042 & 0.0002 & 5.04 & 0.0623 & 0.0072 & 5.06 \\
				\midrule
				Gurobi & 0.6467 & 0.0309 & 111.71 & 0.6257 & 0.0277 & 104.84 & 0.5884 & 0.0286 & 105.69 \\
				\midrule
				BM & 0.0603 & 0.0035 &  81.62  & 0.0057 & 0.0003 & 82.82 &  0.1949 & 0.0082 & 81.43 \\
				\bottomrule
			\end{tabular*}%
			\centering
			\setlength{\tabcolsep}{3.5pt} 
			\begin{tabular*}{0.9\textwidth}{@{\extracolsep{\fill}}cccccccccc}
				\toprule
				\multirow{2.4}{*}{method} & \multicolumn{3}{c}{G7\_SBM} & \multicolumn{3}{c}{G8\_SBM} & \multicolumn{3}{c}{G9\_SBM} \\
				\cmidrule(r){2-4} \cmidrule(r){5-7} \cmidrule(r){8-10} 
				& $\epsilon_{intra}$ & $\epsilon_{inter}$  & ${\rm T_{total}}$(s)     & $\epsilon_{intra}$ & $\epsilon_{inter}$  & ${\rm T_{total}}$(s) & $\epsilon_{intra}$ & $\epsilon_{inter}$  & ${\rm T_{total}}$(s) \\
				\midrule
				TSCM & 0.0018 & 0 & 5.43 & 0.0016 & 0.0003 & 4.58  & 0.0311 & 0.0021 & 5.23  \\
				\midrule
				Gurobi & 0.5187  & 0.0259 & 102.79 & 0.4829 & 0.0225 & 110.49 & 0.4907 & 0.0232 & 104.29 \\
				\midrule
				BM & 0.0316  & 0.0026 & 82.65 & 0.0497 & 0.0025 & 82.26 & 0.0623 & 0.0028& 82.73  \\
				\bottomrule
			\end{tabular*}%
		\end{table}%
		
		Table~\ref{tab:compare-SBM-2} reports the density-based clustering errors and the total computation time after applying the same nearest-medoid assignment procedure to the medoids identified by the three methods. The corresponding ARI values are listed separately in Table~\ref{tab:compare-SBM-ARI} and visualized in Fig.~\ref{fig:compare_ARI_linegraph}.
		
		The density-based errors in Table~\ref{tab:compare-SBM-2} first demonstrate the quality of the partitions obtained after the unified nearest-medoid assignment. Compared with BM, TSCM simultaneously yields smaller $\epsilon_{intra}$ and $\epsilon_{inter}$ on six of the nine SBM instances, namely G4\_SBM -- G9\_SBM. BM performs better on G2\_SBM and G3\_SBM, while the two methods exhibit mixed behavior on G1\_SBM. In contrast, Gurobi produces substantially larger density deviations on all nine instances. Overall, the density-based results indicate that the medoids identified by TSCM generally lead to partitions that preserve the planted intra-cluster and inter-cluster connectivity more accurately than those obtained by the competing methods. Moreover, all clustering results produced by the three methods on the nine SBM instances satisfy the clustering-quality condition in~\eqref{cq-standard}.
		
		A particularly clear advantage of TSCM is its computational efficiency. Its total computation time, including both medoid selection and the unified assignment stage, ranges from only $4.58$s to $8.31$s over the nine SBM instances. By comparison, BM requires approximately $81.43$--$82.82$s, whereas Gurobi requires approximately $102.79$--$111.71$s under the prescribed computational protocol. Hence, TSCM is consistently the fastest method on all nine instances, while still producing high-quality graph partitions according to the density-based criteria.

		\begin{table}[ht]
			\centering
			\caption{Detailed ARI Values of Different Methods on Each SBM Graph.}
			\label{tab:compare-SBM-ARI}%
			\setlength{\tabcolsep}{4.2pt} 
				\begin{tabular*}{0.95\textwidth}{@{\extracolsep{\fill}}cccccccccc}
					\toprule
					Method & G1\_SBM & G2\_SBM & G3\_SBM& G4\_SBM & G5\_SBM & G6\_SBM& G7\_SBM & G8\_SBM & G9\_SBM \\
				
					\midrule
					TSCM &  1  & 0.9155 & 0.9163 & 1& 0.9989& 0.8947& 1& 0.9988& 0.9486  \\
					\midrule
					Gurobi &  0.3709 & 0.4618 & 0.2791 & 0.2046& 0.2603& 0.2201& 0.4214& 0.3757& 0.2846 \\
					\midrule
					BM &  0.9995 & 0.9917 & 0.9452 & 0.9721& 0.9957& 0.8339& 0.9829& 0.9622& 0.9373\\
					\bottomrule
				\end{tabular*}%
					\end{table}%
					
		The ARI results reported in Table~\ref{tab:compare-SBM-ARI} further confirm the clustering effectiveness of TSCM. TSCM achieves a higher ARI than BM on seven of the nine graphs, namely G1\_SBM and G4\_SBM--G9\_SBM, whereas BM performs better on G2\_SBM and G3\_SBM. Moreover, TSCM outperforms Gurobi on all nine instances. In particular, TSCM exactly recovers the planted partitions on G1\_SBM, G4\_SBM, and G7\_SBM, where $\mathrm{ARI}=1$, and achieves nearly exact recovery on G5\_SBM and G8\_SBM, with ARI values of $0.9989$ and $0.9988$, respectively. Across the nine instances, the ARI values obtained by TSCM range from $0.8947$ to $1$, compared with $0.8339$--$0.9995$ for BM and $0.2046$--$0.4618$ for Gurobi.

			\begin{figure}[htp]
			\centering
			\includegraphics[width=1\textwidth]{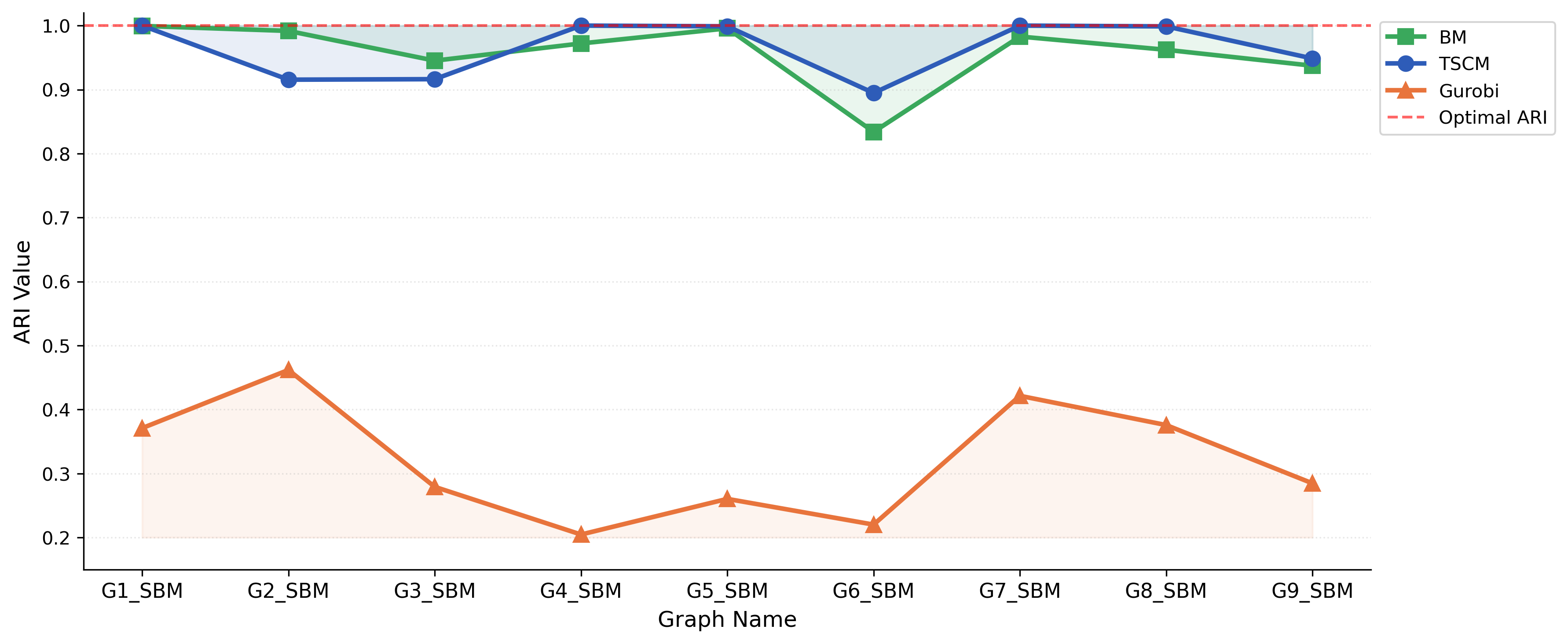}\\
			\caption{Comparison of ARI values obtained by different methods on nine SBM instances. The red dashed line indicates the optimal ARI value of $1$.}
			\label{fig:compare_ARI_linegraph}
		\end{figure}
		
		Figure~\ref{fig:compare_ARI_linegraph} provides a visual comparison of the ARI values obtained by the three methods on the nine SBM instances. The red dashed horizontal line represents the optimal ARI value of $1$, which corresponds to exact agreement between the obtained clustering partition and the ground-truth partition. Therefore, a curve closer to the red dashed line indicates better clustering performance. The blue curve represents TSCM, the green curve represents BM, and the orange curve represents Gurobi. It can be observed that the TSCM curve stays very close to the optimal ARI line over all nine SBM instances and reaches the upper bound of $1$ on G1\_SBM, G4\_SBM, and G7\_SBM. The BM curve also remains close to the optimal line on most instances, but lies below the TSCM curve on seven of the nine SBM graphs. In contrast, the Gurobi curve is consistently farther from the optimal ARI line, indicating a substantially larger discrepancy between its resulting partitions and the ground-truth community structures.
		
		These results demonstrate that the objective value of the medoid-selection model and the quality of the final graph partition are not necessarily monotonically related. The Stage~1 objective evaluates the selected medoid set according to the proposed optimization model, whereas the final clustering quality additionally depends on how these medoids induce the partition through the assignment stage. Consequently, the performance of the proposed two-stage framework should be assessed from both optimization and clustering perspectives. From this end-to-end viewpoint, TSCM provides the best overall balance between clustering accuracy and computational efficiency on the considered SBM instances.

		\begin{remark}[Extended-time behavior of Gurobi]
			The $100$s time limit used in the primary experiments is intended to provide a comparable computational budget. To further investigate the effect of a substantially larger budget, we additionally allow Gurobi to run for up to $500$s on the SBM instances. The extended runs improve the feasible solutions and, on several instances, lead to noticeably higher ARI values after the unified assignment procedure. This indicates that the clustering performance of Gurobi can benefit from additional computation. Nevertheless, these improvements are obtained at a substantially higher computational cost than TSCM, which typically terminates within approximately $5$--$9$s on the same instances. The complete extended-time results are reported in Appendix~\ref{app:gurobi-extended}.
		\end{remark}

	\subsection{Real-world graphs}
	In this section, we evaluate the proposed method on two widely used real-world community detection benchmarks: Zachary's Karate Club network (Karate\_club) and the United States College Football Division IA network during the 2000 season (Football). These networks contain manually identified community structures, which provide reference partitions for evaluating clustering performance. The Karate Club network represents social interactions among members of a university karate club, while the Football network describes game interactions among college football teams, where conferences naturally form communities. The datasets are publicly available from the CommunityDetection repository: \url{https://github.com/qinyuenlp/CommunityDetection/tree/master/data}.
	The basic characteristics of these networks are summarized in Table~\ref{real-graph-c}.
	
	\begin{table}[ht]
		\centering
		\caption{Graph characteristics of two real-world graphs.}
		\label{real-graph-c}%
		\begin{tabular}{ccccc}
			\toprule
			\multirow{2}[4]{*}{Graph name} & \multicolumn{4}{c}{Graph characteristics} \\
			\cmidrule{2-5}          & $n$   & $|E|$   & $\kappa$ & $K$  \\
			\midrule
			Karate\_club & 34    & 78    & 0.14  & 2 \\
			\midrule
			Football & 115   & 613   & 0.09  & 12 \\
			\bottomrule
		\end{tabular}%
		
	\end{table}%
	
	For real-world networks, the underlying graph generation mechanism is unknown, and therefore the density-based evaluation criteria used for synthetic graphs cannot be directly applied. Instead, we evaluate the clustering results by comparing the obtained partitions with the available reference community assignments. We set $\beta=K/n$, $\alpha=1.5\times(2n/K-1)\beta$.
	
	After obtaining the Medoid sets from different methods, the same nearest-Medoid assignment strategy is applied to generate the final community partitions. Since community labels are invariant to permutations, the predicted labels are first aligned with the reference labels using an optimal matching strategy. We then count the number of incorrectly assigned vertices between the obtained partition and the ground-truth partition. A smaller number of misclassified nodes indicates better agreement with the known community structure.
	
	In addition, the computational time of all compared methods on these real-world networks is less than one second. Therefore, the running time difference is negligible and the comparison mainly focuses on the quality of the obtained community partitions.
	
	\subsubsection{Results on Karate Club}
	
	This subsection considers the simplified unweighted version of Zachary's Karate Club network \cite{no3}. The network contains $34$ vertices, where each vertex represents a club member and each edge represents a social relationship between two members. Due to an internal conflict, the club eventually split into two factions, and the ground-truth partition is shown in Figure~\ref{fig:atrue}. In each subfigure of Figure~\ref{fig:1234}, nodes with the same color belong to the same cluster; however, the same color in different subfigures does not imply any correspondence across the clustering results.
	
	We apply the proposed TSCM method to this network to identify its two communities, and compare its clustering results with those obtained by Gurobi and BM under the same CPU runtime of about $1$s. Previous research \cite{miasnikof2024graph} has shown that Gurobi and BM can serve as effective comparison methods for graph clustering problems. To provide a more intuitive comparison of the clustering results, Figure~\ref{fig:1234} shows the ground-truth partition as well as the partitions obtained by TSCM, Gurobi, and BM, where misclassified nodes are marked with red boxes.
	
	As shown in Figure~\ref{fig:aTSCM}, TSCM misclassifies only $1$ node, whereas both Gurobi and BM misclassify $3$ nodes, as shown in Figure~\ref{fig:agurobi} and Figure~\ref{fig:aBM}. Therefore, on the Karate Club network, TSCM produces fewer misclassified nodes than Gurobi and BM.

	\begin{figure}[ht]
		\centering
		\begin{minipage}{1\linewidth}	
			\subfigure[The true clustering]{
				\label{fig:atrue}
				\includegraphics[width=0.47\linewidth]{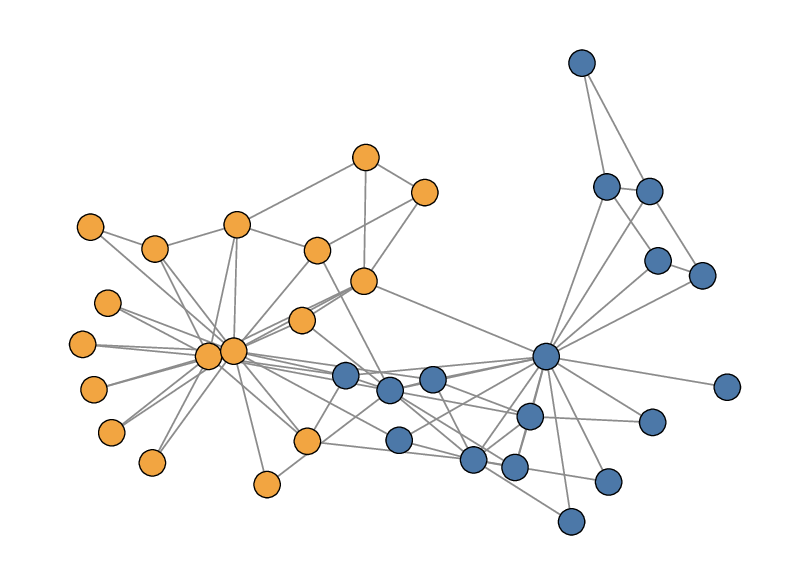}	
			}\noindent
			\subfigure[The clustering result from TSCM ]{
				\label{fig:aTSCM}
				\includegraphics[width=0.47\linewidth]{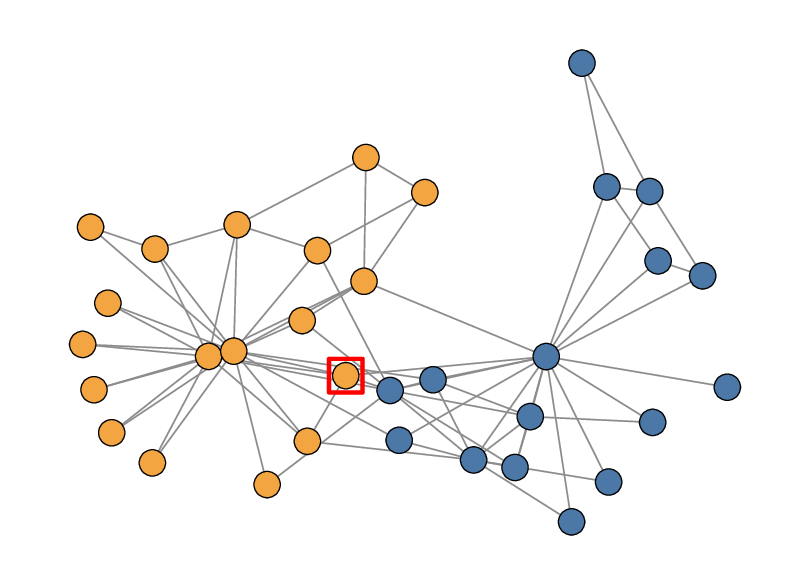}
			}
		\end{minipage}
		\vskip -0.2cm 
		\begin{minipage}{1\linewidth }
			\subfigure[The clustering result from Gurobi]{
				\label{fig:agurobi}
				\includegraphics[width=0.47\linewidth]{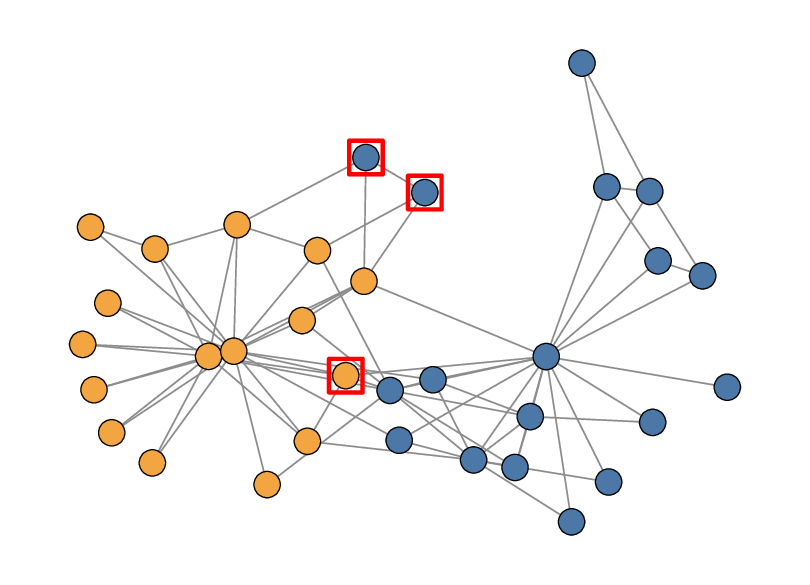}
			}\noindent
			\subfigure[The clustering result from BM]{
				\label{fig:aBM}
				\includegraphics[width=0.47\linewidth]{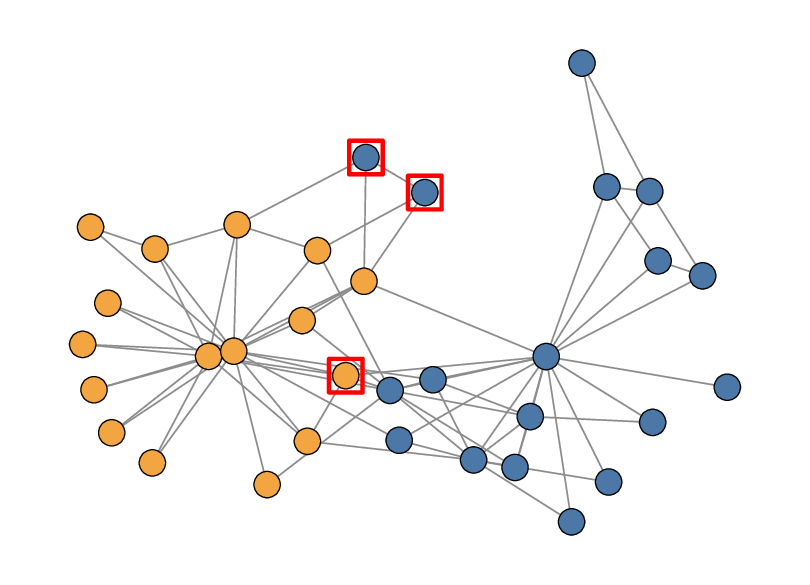}
			}
		\end{minipage}
		\vspace{-0.11in}	
		\caption{Clustering results on the friendship network of Zachary's karate club. Nodes of different colors belong to different factions, and red squares mark misclassified nodes}
		\label{fig:1234}
	\end{figure}
	
	\subsubsection{Results on Football}	
	This section uses the American college football IA group 2000 season network \cite{no3}. The network contains $115$ teams, where each vertex represents a team and an edge indicates that two teams played at least one game against each other during the regular season. According to the conferences to which the teams belong, the network has $12$ ground-truth communities. Since teams within the same conference tend to play each other more frequently, they generally have more connections within the same community. The ground-truth partition of this network is shown in Figure~\ref{fig:2234}, where different colors indicate different communities.
	
	Under the same CPU runtime of about $1$s, we compare the clustering results of TSCM with obtained by Gurobi and BM. The corresponding visualization results are shown in Figure~\ref{fig:bTSCM}, Figure~\ref{fig:bgurobi}, and Figure~\ref{fig:bBM}, respectively, where the misclassified nodes are highlighted by red squares.

			\begin{figure}[ht]
		\centering
		\begin{minipage}{1\linewidth}	
			\centering
			\subfigure[The true clustering]{
				\label{fig:btrue}
				\includegraphics[width=0.40\linewidth]{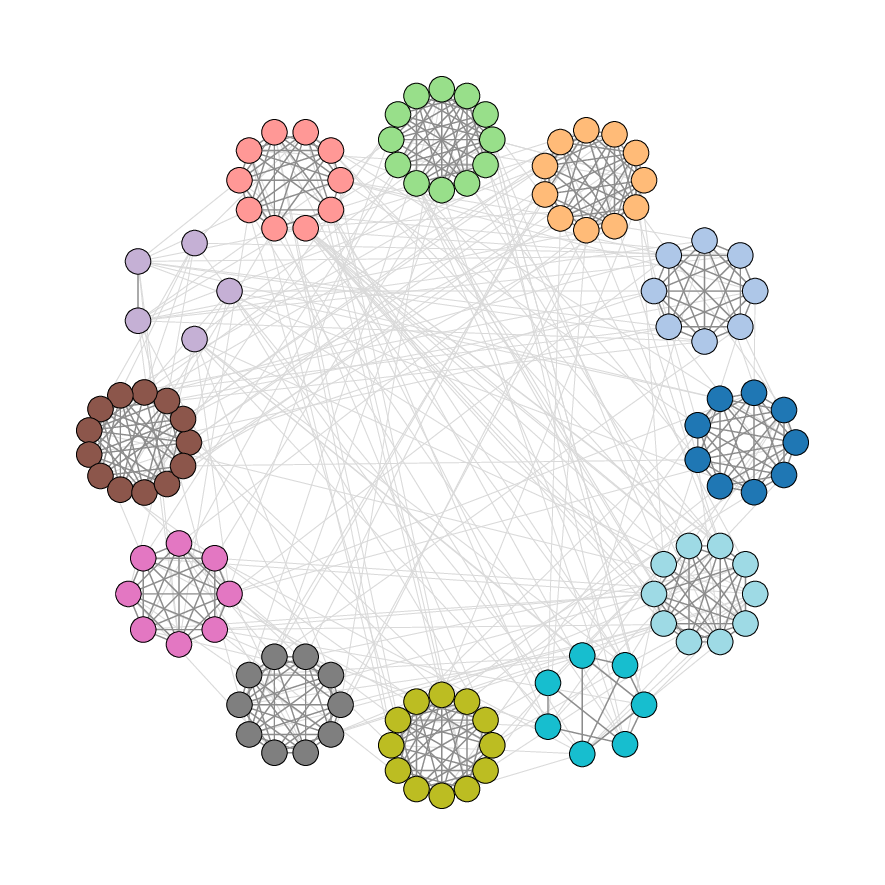}	
			}\noindent\hspace{0.3in}
			\subfigure[The clustering result from TSCM ]{
				\label{fig:bTSCM}
				\includegraphics[width=0.40\linewidth]{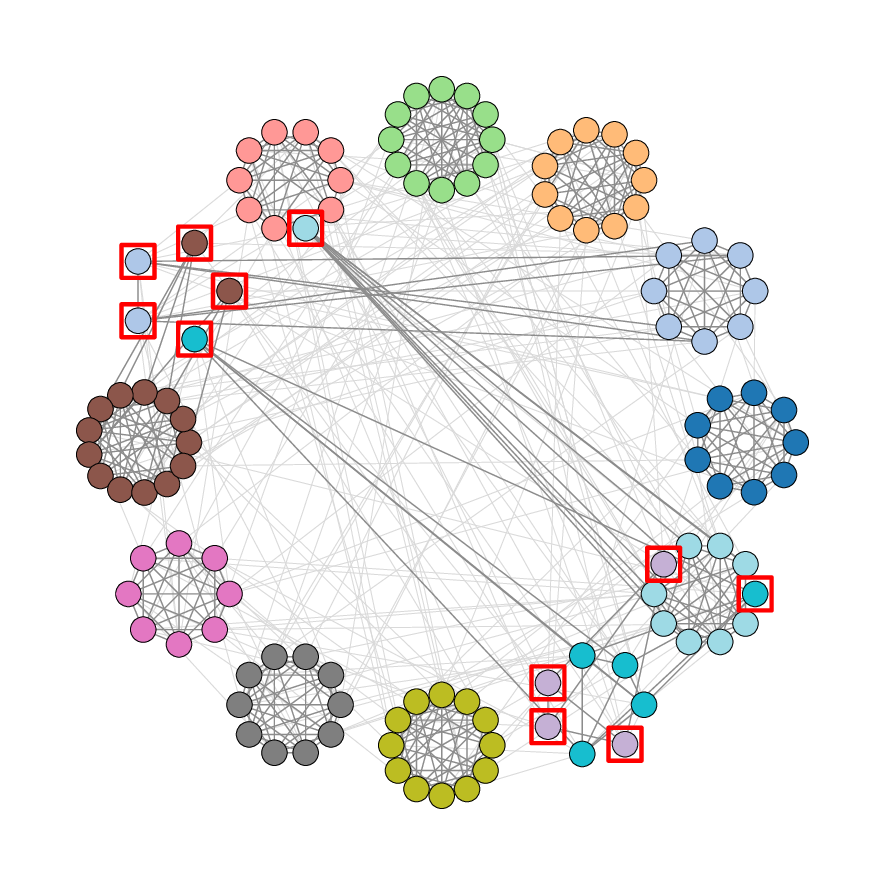}
			}
		\end{minipage}
		\vskip -0.3cm 
		\begin{minipage}{1\linewidth }
			\centering
			\subfigure[The clustering result from Gurobi]{
				\label{fig:bgurobi}
				\includegraphics[width=0.40\linewidth]{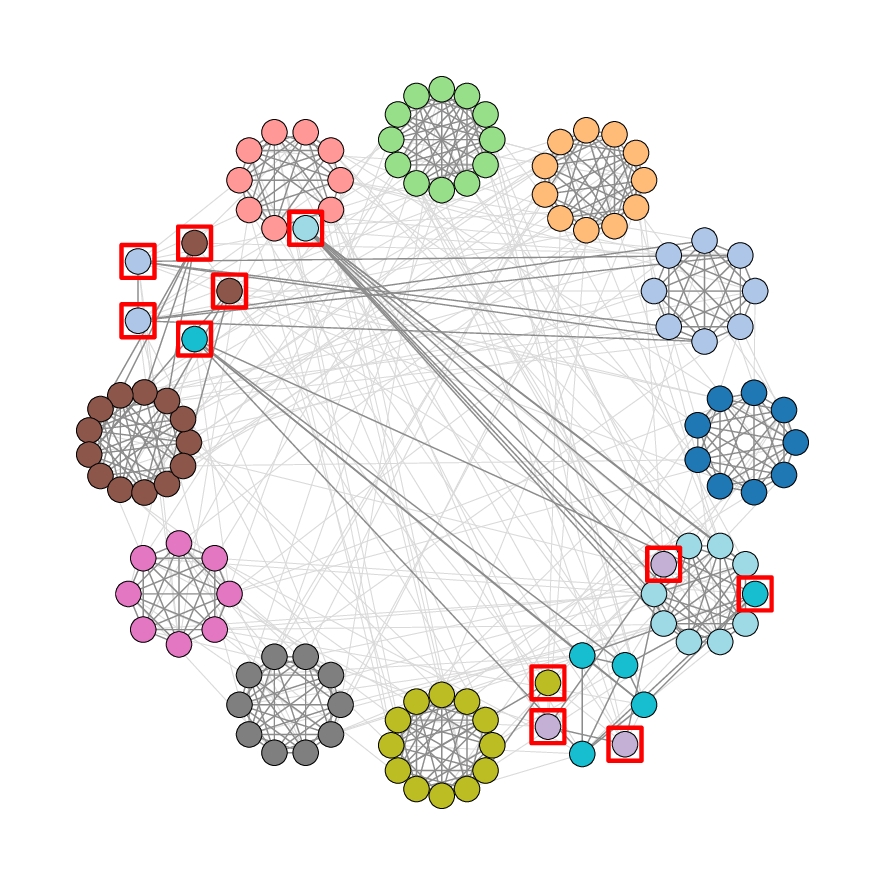}
			}\noindent\hspace{0.3in}
			\subfigure[The clustering result from BM]{
				\label{fig:bBM}
				\includegraphics[width=0.40\linewidth]{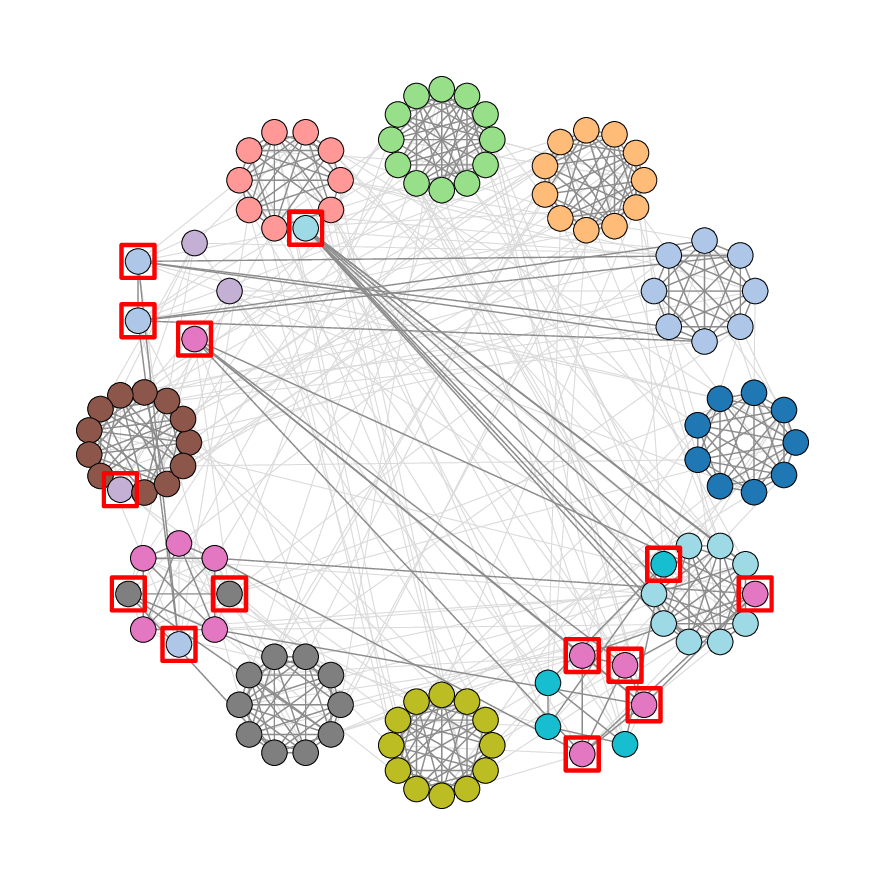}
			}
		\end{minipage}
		\vspace{-0.11in}	
		\caption{Clustering results on the US College Football Division IA 2000 season graph. Each circle groups nodes (teams) belonging to the same cluster, and red squares mark misclassified nodes}
		\label{fig:2234}
	\end{figure}
		
	As can be seen from the figures, both TSCM and Gurobi misclassify $11$ nodes, while BM misclassifies $14$ nodes. Therefore, on the Football network, TSCM and Gurobi produce the same number of misclassified nodes, and both yield fewer misclassifications than BM.
	
	Combining the experimental results on the two real-world networks, Karate Club and Football, it can be seen that under the same computational time budget, TSCM yields fewer misclassified nodes than Gurobi and BM on the Karate Club network; on the Football network, TSCM obtains the same number of misclassified nodes as Gurobi and fewer than BM. These results indicate that TSCM can be applied to community identification in real-world networks and achieve competitive clustering results.
	
	To conclude, we find that TSCM demonstrate the ability to recover clusters not only in synthetic graphs but also in real-world graphs.
	\section{Conclusion}\label{sec:conclusion}

	In this paper, we investigated the quadratic $k$-medoids graph clustering problem from the perspective of CM optimization and proposed a two-stage clustering method (TSCM). By identifying the binary feasible set of the quadratic $k$-medoids model as a discrete CM set, we established a connection between graph clustering and CM optimization. The original binary problem was then reformulated over the convex hull of its feasible set, and a negative squared-norm penalty was introduced to promote extreme-point solutions. This reformulation enables the discrete medoid-selection problem to be addressed through continuous optimization over the convex hull.
	
	Building on this formulation, we developed TSCM to identify representative medoids and construct the final graph partition. In Stage~1, a penalty-continuation projected gradient (PCPG) solver was employed to approximately solve the penalized medoid-selection problem. Starting from a refined multi-start initialization, PCPG progressively increases the penalty parameter and employs a BB projected gradient method combined with a projected Armijo line search to solve each fixed-penalty subproblem. Binary candidate solutions are retained along the continuation path according to the prescribed selection criterion. In Stage~2, the selected medoids are converted into a graph partition through a unified assignment procedure. Specifically, each vertex is assigned to exactly one cluster using a two-level distance criterion, with the Jaccard distance serving as the primary measure and the shortest-path distance providing a secondary criterion when the Jaccard distance cannot distinguish between candidate medoids.
	
	Numerical experiments on PPM and SBM graphs demonstrate the effectiveness and computational efficiency of the proposed method. On the PPM instances, TSCM exactly recovers the planted partitions while maintaining a favorable balance between solution quality and computational cost. On the larger SBM instances, TSCM exhibits substantial computational advantages over Gurobi and BM. In terms of clustering accuracy, TSCM achieves higher ARI values than BM on seven of the nine SBM instances and outperforms Gurobi on all nine instances. Moreover, it exactly recovers the planted partitions on three SBM instances and achieves nearly exact recovery on two additional instances, while requiring substantially less computational time than the competing methods. Finally, experiments on two real-world graph datasets further validate the efficiency and high accuracy of the proposed method.
	
	The numerical results also reveal that a smaller objective value in the medoid-selection problem does not necessarily lead to a more accurate final graph partition. This finding highlights the importance of evaluating graph clustering methods from both the optimization and clustering perspectives, rather than relying solely on the objective value of the medoid-selection model. Overall, the proposed framework provides an effective and computationally efficient approach to graph clustering by combining continuous optimization for medoid selection with a graph assignment procedure. Future work will investigate more effective penalty-parameter and initialization strategies, as well as the relationship between medoid-selection objectives and the quality of the resulting graph partitions.
	

\section*{Acknowledgments}
This work is supported by National Natural Science Foundation
	of China (NSFC) No. 12271526.
      
	\bibliographystyle{unsrt}
	\bibliography{ref}

@article{cq-1,
	title={A density-based statistical analysis of graph clustering algorithm performance},
	author={Miasnikof, Pierre and Shestopaloff, Alexander Y and Bonner, Anthony J and Lawryshyn, Yuri and Pardalos, Panos M},
	journal={Journal of Complex Networks},
	volume={8},
	number={3},
	pages={cnaa012},
	year={2020},
	publisher={Oxford University Press}
}

@inproceedings{cq-2,
	title={A statistical performance analysis of graph clustering algorithms},
	author={Miasnikof, Pierre and Shestopaloff, Alexander Y and Bonner, Anthony J and Lawryshyn, Yuri},
	booktitle={International Workshop on Algorithms and Models for the Web-Graph},
	pages={170-184},
	year={2018},
	organization={Springer}
}

@article{cq-3,
	title={Analysis of weighted networks},
	author={Newman, Mark EJ},
	journal={Physical Review E-Statistical, Nonlinear, and Soft Matter Physics},
	volume={70},
	number={5},
	pages={056131},
	year={2004},
	publisher={APS}
}

@article{Teng_2025, 
	title={An efficient quadratic penalty method for a class of graph clustering problems}, 
	author={Teng, Wenshun and Li, Qingna},
	url={http://dx.doi.org/10.1007/s11081-025-10042-9}, 
	DOI={10.1007/s11081-025-10042-9}, 
	journal={Optimization and Engineering}, 
	publisher={Springe}, 
	year = {2025},
	pages={1-29}
	}

@article{jaccard-1,
	title={{\'E}tude comparative de la distribution florale dans une portion des Alpes et des Jura},
	author={Jaccard, Paul},
	journal={Bulletin de la Soci{\'e}t{\'e} Vaudoise des Sciences Naturelles},
	volume={37},
	pages={547-579},
	year={1901}
}

@book{jaccard-2,
	title={The extended Jaccard distance in complex networks},
	author={Camby, {\'E}glantine and Caporossi, Gilles},
	year={2017},
	publisher={GERAD, {\'E}cole des hautes {\'e}tudes commerciales}
}

@article{jaccard-3,
	title={An empirical comparison of connectivity-based distances on a graph and their computational scalability},
	author={Miasnikof, Pierre and Shestopaloff, Alexander Y and Pitsoulis, Leonidas and Ponomarenko, Alexander},
	journal={Journal of Complex Networks},
	volume={10},
	number={1},
	pages={cnac003},
	year={2022},
	publisher={Oxford University Press}
}

@inproceedings{jaccard-4,
	title={Distances on a graph},
	author={Miasnikof, Pierre and Shestopaloff, Alexander Y and Pitsoulis, Leonidas and Ponomarenko, Alexander and Lawryshyn, Yuri},
	booktitle={International Conference on Complex Networks and Their Applications},
	pages={189-199},
	year={2020},
	organization={Springer}
}

@article{jaccard-5,
	title={Identifying banking transaction descriptions via support vector machine short-text classification based on a specialized labelled corpus},
	author={Garcia-Mendez, Silvia and Fernandez-Gavilanes, Milagros and Juncal-Martinez, Jonathan and Gonz{\'a}lez-Casta{\~n}o, Francisco Javier and Seara, {\'O}scar Barba},
	journal={IEEE Access},
	volume={8},
	pages={61642-61655},
	year={2020},
	publisher={IEEE}
}

@article{jaccard-6,
	title={Plant recognition based on Jaccard distance and BOW},
	author={Wang, Zhaobin and Cui, Jing and Zhu, Ying},
	journal={Multimedia Systems},
	volume={26},
	number={5},
	pages={495-508},
	year={2020},
	publisher={Springer}
}

@article{PPM,
	title={Algorithms for graph partitioning on the planted partition model},
	author={Condon, Anne and Karp, Richard M},
	journal={Random Structures \& Algorithms},
	volume={18},
	number={2},
	pages={116-140},
	year={2001},
	publisher={Wiley Online Library}
}

@article{SBM,
	title={Stochastic blockmodels: first steps},
	author={Holland, Paul W and Laskey, Kathryn Blackmond and Leinhardt, Samuel},
	journal={Social Networks},
	volume={5},
	number={2},
	pages={109-137},
	year={1983},
	publisher={Elsevier}
}

@techreport{gc-no26,
	title={Exploring network structure, dynamics, and function using NetworkX},
	author={Hagberg, Aric and Swart, Pieter J and Schult, Daniel A},
	year={2008},
	institution={Los Alamos National Laboratory (LANL), Los Alamos, NM (United States)}
}

@inproceedings{le2020farthest,
	title={A farthest first traversal based sampling algorithm for k-clustering},
	author={Le Hoang, Nguyen and Dang, Tran Khanh and others},
	booktitle={2020 14th International Conference on Ubiquitous Information Management and Communication (IMCOM)},
	pages={1-6},
	year={2020},
	organization={IEEE}
}

@article{gonzalez1985clustering,
	author  = {Gonzalez, Teofilo F.},
	title   = {Clustering to Minimize the Maximum
	Intercluster Distance},
	journal = {Theoretical Computer Science},
	volume  = {38},
	pages   = {293-306},
	year    = {1985},
	doi     = {10.1016/0304-3975(85)90224-5}
}

@article{ParkJun2009,
	author  = {Park, Hae-Sang and Jun, Chi-Hyuck},
	title   = {A Simple and Fast Algorithm for K-Medoids Clustering},
	journal = {Expert Systems with Applications},
	volume  = {36},
	number  = {2},
	pages   = {3336-3341},
	year    = {2009},
	doi     = {10.1016/j.eswa.2008.01.039}
}

@article{shao2020binary,
	title={Binary MIMO detection via homotopy optimization and its deep adaptation},
	author={Shao, Mingjie and Ma, Wing-Kin},
	journal={IEEE Transactions on Signal Processing},
	volume={69},
	pages={781-796},
	year={2020},
	publisher={IEEE}
}

@inproceedings{k-medoids,
  title     = {A QUBO Formulation of the $k$-Medoids Problem},
  author    = {Bauckhage, Christian and Piatkowski, Nico and Sifa, Rafet and Hecker, Dirk and Wrobel, Stefan},
  booktitle = {Proceedings of the Conference on Lernen, Wissen, Daten, Analysen},
  series    = {CEUR Workshop Proceedings},
  volume    = {2454},
  pages     = {54-63},
  year      = {2019}
}

@article{miasnikof2024graph,
  title   = {Graph Clustering with Boltzmann Machines},
  author  = {Miasnikof, Pierre and Bagherbeik, Mohammad and Sheikholeslami, Ali},
  journal = {Discrete Applied Mathematics},
  volume  = {343},
  pages   = {208-223},
  year    = {2024},
  doi     = {10.1016/j.dam.2023.10.012}
}

@article{liu2024expp1,
  title   = {Extreme Point Pursuit---Part I: A Framework for Constant Modulus Optimization},
  author  = {Liu, Junbin and Liu, Ya and Ma, Wing-Kin and Shao, Mingjie and So, Anthony Man-Cho},
  journal = {IEEE Transactions on Signal Processing},
  volume  = {72},
  pages   = {4541-4556},
  year    = {2024},
  doi     = {10.1109/TSP.2024.3458008}
}

@article{liu2024expp2,
  title   = {Extreme Point Pursuit---Part II: Further Error Bound Analysis and Applications},
  author  = {Liu, Junbin and Liu, Ya and Ma, Wing-Kin and Shao, Mingjie and So, Anthony Man-Cho},
  journal = {arXiv preprint arXiv:2403.06513},
  year    = {2024}
}

@article{schaeffer2007graph,
  title   = {Graph Clustering},
  author  = {Schaeffer, Satu Elisa},
  journal = {Computer Science Review},
  volume  = {1},
  number  = {1},
  pages   = {27-64},
  year    = {2007},
  doi     = {10.1016/j.cosrev.2007.05.001}
}

@article{fortunato2010community,
  title   = {Community Detection in Graphs},
  author  = {Fortunato, Santo},
  journal = {Physics Reports},
  volume  = {486},
  number  = {3-5},
  pages   = {75-174},
  year    = {2010},
  doi     = {10.1016/j.physrep.2009.11.002}
}

@article{fortunato2016community,
  title   = {Community Detection in Networks: A User Guide},
  author  = {Fortunato, Santo and Hric, Darko},
  journal = {Physics Reports},
  volume  = {659},
  pages   = {1-44},
  year    = {2016},
  doi     = {10.1016/j.physrep.2016.09.002}
}

@article{intro-24,
	title={Finding and evaluating community structure in networks},
	author={Newman, Mark EJ and Girvan, Michelle},
	journal={Physical Review E},
	volume={69},
	number={2},
	pages={026113},
	year={2004},
	publisher={APS}
}

@inproceedings{intro-add-24-1,
	title={Integrating MCMC with modularity maximization-based methods for community detection in complex networks},
	author={Ibraheem, Samaa F and Al-Sarray, Basad},
	booktitle={Third International Conference on Emerging Trends in AI and Computational Technologies (ICONEST 2025)},
	volume={14141},
	pages={253-263},
	year={2026},
	organization={SPIE}
}

@article{fan2010linear,
  title   = {Linear and Quadratic Programming Approaches for the General Graph Partitioning Problem},
  author  = {Fan, Neng and Pardalos, Panos M.},
  journal = {Journal of Global Optimization},
  volume  = {48},
  number  = {1},
  pages   = {57-71},
  year    = {2010},
  doi     = {10.1007/s10898-009-9520-1}
}

@article{intro-70,
	title={Resolution limit in community detection},
	author={Fortunato, Santo and Barthelemy, Marc},
	journal={Proceedings of the National Academy of Sciences},
	volume={104},
	number={1},
	pages={36-41},
	year={2007},
	publisher={National Academy of Sciences}
}

@inproceedings{aref2023heuristic,
  title     = {Heuristic Modularity Maximization Algorithms for Community Detection Rarely Return an Optimal Partition or Anything Similar},
  author    = {Aref, Samin and Mostajabdaveh, Mahdi and Chheda, Hriday},
  booktitle = {Computational Science -- ICCS 2023},
  series    = {Lecture Notes in Computer Science},
  pages     = {612-626},
  publisher = {Springer},
  address   = {Cham},
  year      = {2023},
  doi       = {10.1007/978-3-031-36027-5_48}
}

@article{intro-71,
	title={Performance of modularity maximization in practical contexts},
	author={Good, Benjamin H and De Montjoye, Yves-Alexandre and Clauset, Aaron},
	journal={Physical Review E-Statistical, Nonlinear, and Soft Matter Physics},
	volume={81},
	number={4},
	pages={046106},
	year={2010},
	publisher={APS}
}

@article{intro-72,
	title={Robust optimization of graph partitioning involving interval uncertainty},
	author={Fan, Neng and Zheng, Qipeng P and Pardalos, Panos M},
	journal={Theoretical Computer Science},
	volume={447},
	pages={53-61},
	year={2012},
	publisher={Elsevier}
}

@inproceedings{gc-no34,
	title={Graph clustering via intra-cluster density maximization},
	author={Miasnikof, Pierre and Pitsoulis, Leonidas and Bonner, Anthony J and Lawryshyn, Yuri and Pardalos, Panos M},
	booktitle={International Conference on Network Analysis},
	pages={37-48},
	year={2018},
	organization={Springer}
}

@article{rand1971objective,
	author  = {Rand, William M.},
	title   = {Objective Criteria for the Evaluation of Clustering Methods},
	journal = {Journal of the American Statistical Association},
	volume  = {66},
	number  = {336},
	pages   = {846-850},
	year    = {1971},
	doi     = {10.1080/01621459.1971.10482356}
}

@article{hubert1985comparing,
	author  = {Hubert, Lawrence and Arabie, Phipps},
	title   = {Comparing Partitions},
	journal = {Journal of Classification},
	volume  = {2},
	pages   = {193-218},
	year    = {1985},
	doi     = {10.1007/BF01908075}
}

@article{steinley2004properties,
	author  = {Steinley, Douglas},
	title   = {Properties of the {Hubert--Arabie} Adjusted {Rand} Index},
	journal = {Psychological Methods},
	volume  = {9},
	number  = {3},
	pages   = {386-396},
	year    = {2004},
	doi     = {10.1037/1082-989X.9.3.386}
}

@article{no3,
	title={Community structure in social and biological networks},
	author={Girvan, Michelle and Newman, Mark EJ},
	journal={Proceedings of the National Academy of Sciences},
	volume={99},
	number={12},
	pages={7821-7826},
	year={2002},
	publisher={The National Academy of Sciences}
}

@article{zheng2026exact,
	title   = {Exact-Penalty Prox-Linear Methods for Bilevel Optimization with $\ell_1$ Lower-Level Gradient Penalty},
	author  = {Zheng, Yutong and Li, J. and Li, Qingna},
	journal = {Axioms},
	year    = {2026},
	volume  = {15},
	number  = {7},
	pages   = {512}
}
	
	\appendix
    \section{Extended-Time Results of Gurobi on SBM Instances}
    \label{app:gurobi-extended}
	
	To further investigate the effect of increasing the computational budget of the commercial solver, we provide additional results obtained by allowing Gurobi to run for up to \(500\) s on the SBM instances. Unlike the primary comparison, where Gurobi is restricted to a \(100\)-s time limit for a budget-matched evaluation, these experiments are intended only to examine whether a substantially longer optimization time can improve the obtained solutions.
	
	The reported objective values correspond to the best feasible incumbent solutions obtained by Gurobi within the \(500\)-s time limit. Since global optimality is not necessarily certified within the given time limit, these values should be interpreted as incumbent solutions rather than proven optimal solutions.

	\begin{table}[htbp]
		\centering
		\caption{Extended-time optimization results of Gurobi on SBM instances with a \(500\)-s time limit.}
		\label{tab:gurobi-500-opt}
		\setlength{\tabcolsep}{2.4pt}
		\begin{tabular*}{0.9\textwidth}{@{\extracolsep{\fill}}cccccccccc}
			\toprule
			Instance & G1\_SBM & G2\_SBM & G3\_SBM& G4\_SBM & G5\_SBM & G6\_SBM& G7\_SBM & G8\_SBM & G9\_SBM \\
			\midrule
			$f(\mathbf{x})$ & -11.4121 & -10.4020 & -9.7324 & -9.7078 & -8.9746 & -9.5735 & -9.5785 & -8.7923 & -8.9331\\
			     CPU time(s)  & 451.34 & 443.34 & 451.39 & 414.31 & 404.80 & 493.5566 & 486.25 & 452.22 & 502.53 \\
			\bottomrule
			\end{tabular*}
	\end{table}
	
   The medoids obtained from the extended Gurobi runs are further processed by the same nearest-medoid assignment procedure.
   
   \begin{table}[ht]
   	\centering
   	\caption{Clustering quality obtained from the \(500\)-s Gurobi solutions after unified assignment.}
   	\label{tab:gurobi-500-clustering}
   	\setlength{\tabcolsep}{2.8pt}
   	\begin{tabular*}{0.9\textwidth}{@{\extracolsep{\fill}}cccccccccc}
   		\toprule
   		Instance & G1\_SBM & G2\_SBM & G3\_SBM& G4\_SBM & G5\_SBM & G6\_SBM& G7\_SBM & G8\_SBM & G9\_SBM \\
   		\midrule
   		$\epsilon_{\rm intra}$ & 0.0020 & 0.0808 & 0.4630 & 0.2153 & 0.2802 & 0.3894 & 0.2794 & 0.2046 & 0.3690 \\
   		$\epsilon_{\rm inter}$ & 0.0002 & 0.0042 & 0.0232 & 0.0098 & 0.0114 & 0.0200 & 0.0141 & 0.0096 & 0.0184 \\
   	                       ARI & 0.9996 & 0.9308 & 0.5009 & 0.7861 & 0.7527 & 0.5859 & 0.7660 & 0.7732 & 0.5250 \\
   	             ${\rm T_{total}}$(s) & 451.48 & 443.43 & 452.03 & 415.14 & 404.88 & 494.83 & 487.63 & 452.40 & 502.98 \\
   		\bottomrule
   	\end{tabular*}
   \end{table}
   
	The extended-time results show that increasing the Gurobi time limit from \(100\) s to \(500\) s can improve the incumbent objective values on some instances. However, the additional computational effort does not substantially change the final clustering comparison. The clustering quality obtained from the extended Gurobi runs remains inferior to or comparable with that of TSCM, while requiring a significantly larger computational budget.

\end{document}